\newtheorem{thm}{Theorem}[section]
\newtheorem{conj}[thm]{Conjecture}
\newtheorem{lem}[thm]{Lemma}
\newtheorem{cor}[thm]{Corollary}
\newenvironment {proof} {\noindent{\em Proof.}}{\hspace*{\fill}$\Box$\par\vspace{4mm}}
\newcommand{\ml}{l\kern-0.55mm\char39\kern-0.3mm}
\title{\textbf{More on rainbow disconnection in graphs\footnote{Supported by NSFC No.11871034, 11531011 and NSFQH No.2017-ZJ-790.}}}
\author{{\small Xuqing Bai, Renying Chang, Xueliang Li } \\
{\small  Center for Combinatorics and LPMC}\\
{\small Nankai University, Tianjin 300071, P.R. China}\\
{\small Email: baixuqing0@163.com, changrysd@163.com, lxl@nankai.edu.cn}\\
}
\date{}
\begin{document}
\maketitle
\begin{abstract}
Let $G$ be a nontrivial edge-colored connected graph. An edge-cut
$R$ of $G$ is called a rainbow cut if no two edges of it are colored
the same. An edge-colored graph $G$ is rainbow disconnected if for
every two vertices $u$ and $v$, there exists a $u-v$ rainbow cut.
For a connected graph $G$, the rainbow disconnection number of $G$,
denoted by $rd(G)$, is defined as the smallest number of colors that
are needed in order to make $G$ rainbow disconnected. In this paper,
we first solve a conjecture that determines the maximum size of a
connected graph $G$ of order $n$ with $rd(G) = k$ for given integers
$k$ and $n$ with $1\leq k\leq n-1$, where $n$ is odd, posed by
Chartrand et al. in \cite{CDHHZ}. Secondly, we discuss bounds of the
rainbow disconnection numbers for complete multipartite graphs,
critical graphs, minimal graphs with respect to chromatic index and
regular graphs, and give the rainbow disconnection numbers for
several special graphs. Finally, we get the Nordhaus-Gaddum-type
theorem for the rainbow disconnection number of graphs. We prove
that if $G$ and $\overline{G}$ are both connected, then $n-2 \leq
rd(G)+rd(\overline{G})\leq 2n-5$ and $n-3\leq rd(G)\cdot
rd(\overline{G})\leq (n-2)(n-3)$. Furthermore, examples are given to
show that the upper bounds are sharp for $n\geq 6$, and the lower
bounds are sharp when $G=\overline{G}=P_4$.

\noindent\textbf{Keywords:}
edge-coloring, edge-connectivity, chromatic index,
rainbow disconnection number, Nordhaus-Gaddum-type

\noindent\textbf{AMS subject classification 2010:} 05C15, 05C40.
\end{abstract}

\section{Introduction}

All graphs considered in this paper are simple, finite and
undirected. Let $G=(V(G),E(G))$ be a nontrivial connected graph with
the vertex set $V(G)$ and the edge set $E(G)$. For $v\in V(G)$, let
$d_G(v)$ and $N_G(v)$ denote the $degree$ of $v$ and the $neighbour$
of $v$ in $G$, respectively. We use $\delta(G)$ and
$\Delta(G)$ to denote the minimum and maximum degree of $G$. $G_\Delta$ is the
subgraph of $G$ induced by the vertices of maximum degree.
$\overline{G}$ is the $complemet$ of $G$. For any notation or
terminology not defined here, we follow those used in \cite{BM}.

Throughout this paper, we use $P_n$, $C_n$, $K_n$ to denote a path,
a cycle and a complete graph of order $n$, respectively. Given two
disjoint graphs $G$ and $H$, the \emph{join} of two graphs $G$ and
$H$, denoted by $G\vee H$, is obtained from the vertex-disjoint
copies of $G$ and $H$ by adding all edges between $V(G)$ and $V(H)$.

Let $G$ be a graph with an \emph{edge-coloring} $c$:
$E(G)\rightarrow [k] = \{1,2,...,k\}$, $k \in \mathbb{N}$, where
adjacent edges may be colored the same. When adjacent edges of $G$
receive different colors by $c$, the edge-coloring $c$ is called
\emph{proper}. The \emph{chromatic index} of $G$, denoted by $\chi'(G)$,
is the minimum number of colors needed in a proper coloring of $G$.
By a famous theorem of Vizing \cite{V},
$$\Delta(G) \leq \chi'(G) \leq \Delta(G)+1$$
for every nonempty graph $G$. And if $\chi'(G) = \Delta(G)$, then
$G$ is \emph{Class} $1$; if $\chi'(G) = \Delta(G) + 1$, then $G$
is \emph{Class} $2$.

A path is \emph{rainbow} if no two edges of it are colored the same.
An edge-colored graph $G$ is \emph{rainbow~connected} if every two
vertices are connected by a rainbow path. An edge-coloring under
which $G$ is rainbow connected is called a
\emph{rainbow~connection~coloring}. Clearly, if a graph is rainbow
connected, it must be connected. For a connected graph $G$, the
\emph{rainbow connection number} of $G$, denoted by $rc(G)$, is the
smallest number of colors that are needed to make $G$ rainbow
connected. Rainbow connection was introduced by Chartrand et al.
\cite{CJMZ} in $2008$. For more details on rainbow connection, see
the book \cite{LS} and the survey paper \cite{LSS}.

In this paper, we investigate a new concept that is somewhat reverse
to rainbow connection and present some results dealing with this
concept.

An \emph{edge-cut} of a graph $G$ is a set $R$ of edges such that
$G-R$ is disconnected. The minimum number of edges in an edge-cut is
its \emph{edge-connectivity} $\lambda(G)$. We have the well-known
inequality $\lambda(G)\leq \delta(G)$. For two vertices $u$ and $v$,
let $\lambda(u,v)$ denote the minimum number of edges in an edge-cut
$R$ such that $u$ and $v$ lie in different components of $G-R$. The
following result presents an alternate interpretation of
$\lambda(u,v)$ (see \cite{EFS}, \cite{FF}).

\emph{For every two vertices $u$ and $v$ in a graph $G$,
\emph{$\lambda(u,v)$} is the maximum number of pairwise
edge-disjoint $u-v$ paths in $G$}.

The \emph{upper edge-connectivity} $\lambda^+(G)$ is defined by
$\lambda^+(G) = max\{\lambda(u,v): u, v\in V(G)\}$. Consider, for
example, the graph $K_n + v$ obtained from the complete graph $K_n$,
one vertex of which is attached to a single vertex $v$. For this
graph, $\lambda(K_n+v) = 1$ while $\lambda^+(K_n+v) = n-1$. Thus,
$\lambda(G)$ denotes the global minimum edge-connectivity of a
graph, while $\lambda^+(G)$ denotes the local maximum
edge-connectivity of a graph.

An edge-cut $R$ of $G$ is called a rainbow cut if no two edges in
$R$ are colored the same. A rainbow cut $R$ is said to separate two
vertices $u$ and $v$ if $u$ and $v$ belong to different components
of $G-R$. Such rainbow cut is called a $u-v$ rainbow cut. An
edge-colored graph $G$ is \emph{rainbow~disconnected} if for every
two vertices $u$ and $v$ of $G$, there exists a $u-v$ rainbow cut in
$G$. In this case, the edge-coloring $c$ is called a
\emph{rainbow disconnection coloring} of $G$. Similarly, we define
the \emph{rainbow disconnection number} (or {\it RD number} for short)
of $G$, denoted $rd(G)$, as the smallest number of colors that are
needed in order to make $G$ rainbow disconnected. A rainbow disconnection
coloring with $rd(G)$ colors is called an rd-\emph{coloring} of $G$.

A Nordhaus-Gaddum-type result is a (tight) lower or upper bound on the
sum or product of the values of a parameter for a graph and its complement.
The name ¡°Nordhaus-Gaddum-type¡± is given because Nordhaus and Gaddum
\cite{NG} first established the following type of inequalities for chromatic
numbers in $1956$. They proved that if $G$ and $\overline{G}$ are
complementary graphs on $n$ vertices whose chromatic numbers are $\chi(G)$
and $\chi(\overline{G})$, respectively, then
$$2\sqrt{n}\leq \chi(G)+\chi(\overline{G})\leq n+1, n\leq
\chi(G)\cdot\chi(\overline{G})\leq (\frac{n+1}{2})^2.$$
Since then, the Nordhaus-Gaddum type relations have received wide
attention: rainbow connection number \cite{CLL},  Wiener index
\cite{LWYA}, connectivity \cite{HV}, domination number (\cite{HH},
\cite{SDK}), and so on. For more results, we refer to a recent
survey paper \cite{AH}.

The remainder of this paper will be organized as follows. In Section
$2$, we solve a conjecture which was posed by Chartrand et al. in
\cite{CDHHZ}. In Section $3$, we discuss the bounds of rainbow
disconnection numbers of graphs on some parameters and give the
rainbow disconnection numbers of some well-known graphs. In Section
$4$, we get the Nordhaus-Gaddum-type theorem for the rainbow
disconnection number of graphs and prove that the bounds are sharp.
\section{Proof of a conjecture }

In \cite{CDHHZ}, for given integers $k$ and $n$ with $1 \leq k \leq n-1$,
the authors have determined the minimum size of a connected graph $G$ of
order $n$ with $rd(G) = k$. So, this brings up the question of determining
the maximum size of a connected graph $G$ of order $n$ with $rd(G) = k$.
For odd integer $n$, the authors give the following conjecture.
\begin{conj}\label{conj1}
Let $k$ and $n$ be integers with $1 \leq k \leq n-1$ and $n\geq 5$ is odd.
Then the maximum size of a connected graph $G$ of order $n$ with $rd(G)=k$
is $\frac{(k+1)(n-1)}{2}$.
\end{conj}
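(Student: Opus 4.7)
The plan is to establish matching upper and lower bounds: an upper bound $|E(G)|\le (k+1)(n-1)/2$ that actually holds for every connected graph $G$ with $rd(G)=k$ (independent of the parity of $n$), together with an explicit construction that exhibits equality when $n$ is odd. The starting point is the easy inequality $rd(G)\ge \lambda^+(G)$, obtained because any $u$-$v$ rainbow cut contains at least $\lambda(u,v)$ edges with pairwise distinct colors. Assuming $rd(G)=k$ therefore yields $\lambda^+(G)\le k$.

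For the upper bound I would invoke a Gomory-Hu tree $T$ of $G$: a weighted tree on $V(G)$ such that for every edge $e=uv$ of $T$, the weight $w(e)$ equals $\lambda(u,v)$, and removing $e$ from $T$ splits $V(G)$ into two parts whose edge cut in $G$ has exactly $w(e)$ edges. From $\lambda^+(G)\le k$, every weight satisfies $w(e)\le k$, so $\sum_{e\in E(T)} w(e)\le k(n-1)$. A double count then gives
\[
\sum_{e\in E(T)} w(e)=\sum_{xy\in E(G)} d_T(x,y)\ge 2|E(G)|-(n-1),
\]
where the last inequality uses $d_T(x,y)=1$ for the at most $n-1$ edges of $G$ that lie in $E(T)$ and $d_T(x,y)\ge 2$ for the remaining edges of $G$. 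Combining the two displays yields $|E(G)|\le (k+1)(n-1)/2$.

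For the matching construction (with $n$ odd), I would take $G=K_1\vee H$, where $H$ is any $(k-1)$-regular graph on $n-1$ vertices. Such $H$ exists precisely because $n-1$ is even, so that $(k-1)(n-1)$ is even. Then $|E(G)|=(n-1)+(k-1)(n-1)/2=(k+1)(n-1)/2$. The bound $rd(G)\ge k$ follows from $\lambda(v_0,u)=k$ for every $u\in V(H)$, witnessed by the $k$ edge-disjoint $v_0$-$u$ paths consisting of $v_0u$ itself together with the paths $v_0xu$ through each $H$-neighbor $x$ of $u$. For $rd(G)\le k$, I would use Vizing's theorem to properly edge-color $H$ with at most $k$ colors; since $d_H(u)=k-1$, some color in $[k]$ is missing at each $u\in V(H)$, and assigning that missing color to $v_0u$ makes the $k$ edges of $G$ incident to $u$ receive $k$ pairwise distinct colors. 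Those $k$ edges then form a rainbow edge-cut that separates $u$ from $V(G)\setminus\{u\}$; since every pair of distinct vertices of $G$ contains at least one vertex of $V(H)$, a rainbow cut exists for every pair.

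The main obstacle I expect is the Gomory-Hu step, which is the engine behind the sharp coefficient $(k+1)/2$: its exact-cut property is what lets a single spanning-tree-like object encode enough pair-wise cut information to support the single double count above, and any approach that avoids it would essentially have to rebuild this ingredient. The construction verification is routine by comparison, once one notices that $d_H(u)=k-1<k$ always leaves a spare color at every $u\in V(H)$ to paint the edge $v_0u$.
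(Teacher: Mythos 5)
Your proposal is correct, and while the overall skeleton matches the paper's (upper bound via $rd(G)\ge\lambda^+(G)$, plus the extremal construction $K_1\vee H$ with $H$ a $(k-1)$-regular graph on $n-1$ vertices), the engine of your upper bound is genuinely different. The paper imports the inequality from a theorem of Mader: if $e(G)>\frac{k+1}{2}(n-1)-\frac{1}{2}\sigma_k(G)$ then $\lambda^+(G)\ge k+1$, which immediately forces $e(G)\le\frac{(k+1)(n-1)}{2}$ when $rd(G)=k$. You instead reprove the special case you need from scratch via a Gomory--Hu tree: the double count $\sum_{e\in E(T)}w(e)=\sum_{xy\in E(G)}d_T(x,y)\ge 2e(G)-(n-1)$ together with $w(e)\le\lambda^+(G)\le k$ is a clean, self-contained derivation, and it even avoids the paper's need to treat $k=n-1$ separately (Mader's lemma requires $n\ge k+2$). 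Your construction also differs in two small but real ways: the paper insists that $H$ be chosen $1$-factorable so that $\chi'(H)=k-1$ and then paints all star edges with a single new color $k$, whereas you take an arbitrary $(k-1)$-regular $H$, apply Vizing to get a proper coloring with at most $k$ colors, and give each edge $v_0u$ a color missing at $u$ --- this works because $d_H(u)=k-1<k$ guarantees a spare color, and it removes the (true but unjustified-in-the-paper) existence claim for a $1$-factorable $H$. Finally, for the lower bound $rd(G)\ge k$ on the construction you exhibit $k$ explicit edge-disjoint $v_0$--$u$ paths rather than invoking Mader a second time; both are valid, yours is more elementary. The only cost of your route is the reliance on the Gomory--Hu theorem, a heavier general tool than the single extremal lemma the paper cites, but everything you assert about it (exactness of the fundamental cuts, the resulting identity) is standard and used correctly.
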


Before we give the proof of Conjecture \ref{conj1}, some auxiliary lemmas
are stated as follows.
\begin{lem}{\upshape\cite{CDHHZ}}\label{lem1}
If $G$ is a nontrivial connected graph, then
$$\lambda(G) \leq \lambda^+(G)\leq rd(G)\leq \chi'(G) \leq \Delta(G)+1 .$$
\end{lem}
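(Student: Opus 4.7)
The plan is to establish the four inequalities in the chain $\lambda(G) \leq \lambda^+(G) \leq rd(G) \leq \chi'(G) \leq \Delta(G)+1$ one at a time, working from left to right, since each rests on a different idea (definitional, max-flow, a trivial-cut argument, and Vizing).

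The first inequality $\lambda(G)\leq\lambda^+(G)$ is immediate: writing $\lambda(G)=\min\{\lambda(u,v):u,v\in V(G)\}$ and recalling that $\lambda^+(G)=\max\{\lambda(u,v):u,v\in V(G)\}$ by definition, a minimum is bounded above by the corresponding maximum. The last inequality $\chi'(G)\leq\Delta(G)+1$ is simply the statement of Vizing's theorem, which is quoted earlier in the excerpt and may be invoked directly.

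For the middle inequality $\lambda^+(G)\leq rd(G)$, I would fix an rd-coloring $c$ of $G$ using $rd(G)$ colors and choose vertices $u,v$ that attain $\lambda(u,v)=\lambda^+(G)$. Since $c$ is a rainbow disconnection coloring, there exists a $u$--$v$ rainbow cut $R$. Because $R$ is in particular a $u$--$v$ edge-cut, $|R|\geq\lambda(u,v)=\lambda^+(G)$, and because $R$ is rainbow the total number of colors used must be at least $|R|$. Hence $rd(G)\geq\lambda^+(G)$.

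The step I view as the key one is $rd(G)\leq\chi'(G)$. Here I would exhibit a proper edge-coloring $c$ with $\chi'(G)$ colors and argue that it is already a rainbow disconnection coloring. For any two vertices $u,v$, the set $R_u$ of edges incident to $u$ is a $u$--$v$ edge-cut, because deleting $R_u$ isolates $u$ from every other vertex of $G$, in particular from $v$. Since $c$ is proper, the edges in $R_u$ pairwise receive distinct colors, so $R_u$ is a $u$--$v$ rainbow cut under $c$. Therefore $c$ witnesses rainbow disconnection, giving $rd(G)\leq\chi'(G)$. None of the four steps presents a genuine obstacle; the only point to be careful about is the trivial-cut observation in this last step, which is what ties a global proper-coloring invariant to the local cut-based definition of $rd(G)$.
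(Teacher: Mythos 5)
Your proof is correct. The paper does not prove this lemma itself (it is quoted from \cite{CDHHZ}), but your argument is exactly the standard one behind it, and indeed the same two key observations --- that a rainbow $u$--$v$ cut must have at least $\lambda(u,v)$ edges, and that under a proper edge-coloring the star $E_u$ of edges at $u$ is a $u$--$v$ rainbow cut --- are the ones this paper reuses repeatedly in its own constructions in Sections 2--4.
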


\begin{lem}{\upshape\cite{CDHHZ}}\label{tree}
Let $G$ be a nontrivial connected graph. Then $rd(G)=1$ if and only if
$G$ is a tree.
\end{lem}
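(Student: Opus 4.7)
The plan is to prove both directions of the equivalence, using the elementary fact that a rainbow cut using only one color is an edge-cut of size one, i.e., a bridge of $G$. The forward direction exploits that every edge of a tree is a bridge, while the backward direction will be immediate from the chain of inequalities in Lemma~\ref{lem1}.

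For the forward direction, I would assume $G$ is a nontrivial tree and color every edge with the single color $1$. Given any two distinct vertices $u,v \in V(G)$, the unique $u$--$v$ path in $G$ contains some edge $e$; removing $e$ disconnects $G$ into two components that separate $u$ from $v$, so $\{e\}$ is an edge-cut, and it is trivially rainbow since it has only one edge. Thus this $1$-coloring is a rainbow disconnection coloring, so $rd(G)\leq 1$. Combined with $rd(G)\geq 1$ (valid for any nontrivial connected graph), we get $rd(G)=1$.

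For the backward direction, I would argue the contrapositive: if $G$ is not a tree, then $G$ is a nontrivial connected graph that contains a cycle $C$. Choose any two adjacent vertices $u,v$ on $C$; the edge $uv$ and the remaining $u$--$v$ arc of $C$ give two edge-disjoint $u$--$v$ paths in $G$, so by the alternate interpretation of $\lambda(u,v)$ quoted from \cite{EFS,FF} in the introduction, we have $\lambda(u,v)\geq 2$. Hence $\lambda^+(G)\geq 2$, and Lemma~\ref{lem1} yields $rd(G)\geq \lambda^+(G)\geq 2$, so $rd(G)\neq 1$.

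There is no real obstacle in this argument: the only content is unpacking the definition of a rainbow cut with one color and invoking the already-established inequality $\lambda^+(G)\leq rd(G)$. The step most worth writing carefully is the forward direction, to make sure one spells out that a single edge on the unique tree path between $u$ and $v$ already constitutes a $u$--$v$ rainbow cut.
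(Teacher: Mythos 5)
The paper does not prove this lemma; it is quoted from \cite{CDHHZ} as a known result, so there is no internal proof to compare against. Your argument is correct and is the standard one: a one-colored rainbow cut is exactly a bridge, every edge of a tree is a bridge separating the endpoints of the unique path, and conversely a cycle forces $\lambda^+(G)\geq 2$, which combined with $\lambda^+(G)\leq rd(G)$ from Lemma~\ref{lem1} rules out $rd(G)=1$. Both directions are sound and there is no circularity in invoking Lemma~\ref{lem1}, since that inequality chain is established independently in \cite{CDHHZ}.
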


\begin{lem}{\upshape\cite{CDHHZ}}\label{cycle}
Let $G$ be a cycle of order $n$. Then $rd(G)=2$.
\end{lem}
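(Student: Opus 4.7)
I would prove $rd(C_n) = 2$ by matching lower and upper bounds.

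For the lower bound, since $C_n$ is not a tree, Lemma \ref{tree} immediately gives $rd(C_n) \geq 2$. Equivalently, $\lambda(C_n) = 2$ combined with Lemma \ref{lem1} yields the same bound.

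For the upper bound, I would exhibit an explicit $2$-edge-coloring that is a rainbow disconnection coloring. Label the edges of $C_n$ cyclically as $e_1, e_2, \ldots, e_n$, color $e_1$ with color $1$, and color each of $e_2, \ldots, e_n$ with color $2$. Given any two distinct vertices $u, v$, the cycle decomposes into exactly two internally-disjoint $u$-$v$ paths $P_1, P_2$ whose edge sets partition $E(C_n)$ (if $uv$ is itself an edge, one of these paths has length one). The distinguished edge $e_1$ lies on exactly one of these paths, say $P_1$, while every edge of $P_2$ carries color $2$. Then $R = \{e_1, e'\}$, where $e'$ is any edge of $P_2$, uses both colors and is therefore rainbow. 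Moreover, $C_n - R$ splits into two arcs placing $u$ and $v$ in different components, so $R$ is a $u$-$v$ rainbow cut.

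The argument is essentially structural and presents no real obstacle; the only point demanding a moment of care is the boundary case when $u$ or $v$ is incident to $e_1$, where one of $P_1, P_2$ collapses to a single edge, but the same reasoning goes through since $\{e_1,e'\}$ remains a genuine edge cut separating $u$ from $v$.
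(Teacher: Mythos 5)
Your proof is correct: the lower bound follows from Lemma \ref{tree} (or from $\lambda(C_n)=2$ and Lemma \ref{lem1}), and the coloring assigning color $1$ to a single edge $e_1$ and color $2$ to all others does yield, for every pair $u,v$, a rainbow cut consisting of $e_1$ together with one edge from the opposite $u$--$v$ arc. The paper itself gives no proof of this lemma (it is imported from \cite{CDHHZ}), and your argument is essentially the standard one used there, so there is nothing further to compare.
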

\begin{lem}{\upshape\cite{CDHHZ}}\label{kn}
For each integer $n\geq 4$, $rd(K_n)=n-1$.
\end{lem}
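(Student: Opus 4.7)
The plan is to establish $rd(K_n)=n-1$ by proving the matching lower and upper bounds separately.

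For the lower bound $rd(K_n)\ge n-1$, I would invoke Lemma \ref{lem1}, which gives $rd(G)\ge \lambda^{+}(G)$. Between any two vertices $u,v$ of $K_n$ there are $n-1$ pairwise edge-disjoint $u$--$v$ paths: the direct edge $uv$ together with the $n-2$ length-two detours $u\text{-}w\text{-}v$ over the other vertices $w$. By the Menger-type equality recalled in the excerpt, $\lambda(u,v)=n-1$ for every pair, so $\lambda^{+}(K_n)=n-1$ and therefore $rd(K_n)\ge n-1$.

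For the upper bound I would split on the parity of $n$. When $n$ is even, $K_n$ is Class~$1$ with $\chi'(K_n)=n-1$ (realized by decomposing $K_n$ into $n-1$ perfect matchings), so the inequality $rd(G)\le \chi'(G)$ from Lemma \ref{lem1} immediately yields $rd(K_n)\le n-1$. This case is essentially free.

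The genuine obstacle is the odd case, because $\chi'(K_n)=n$ so any proper edge-coloring uses one color too many; I need a non-proper coloring that is still rainbow disconnection. My plan is the following explicit construction with $n-1$ colors. Fix any vertex $w\in V(K_n)$, assign color $1$ to all $n-1$ edges of $\delta(w)$, and then properly edge-color the remaining $K_{n-1}$ on $V(K_n)\setminus\{w\}$ using the $n-2$ colors $\{2,3,\ldots,n-1\}$; this is possible because $n-1$ is even, so $\chi'(K_{n-1})=n-2$. To verify that the resulting coloring is a rainbow disconnection coloring, let $u\ne v$ be any pair of vertices. At least one of them, say $u$, differs from $w$, and its edge-neighbourhood $\delta(u)$ consists of the single edge $uw$ (color $1$) together with $u$'s $n-2$ edges inside $K_{n-1}$, which by properness use each color in $\{2,\ldots,n-1\}$ exactly once. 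Hence $\delta(u)$ is a rainbow $u$--$v$ cut of size $n-1$, so $rd(K_n)\le n-1$ and the lemma follows.
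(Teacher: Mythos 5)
Your proposal is correct. Note that the paper itself offers no proof of this lemma: it is imported verbatim from \cite{CDHHZ}, so there is no in-paper argument to compare against line by line. Your lower bound is exactly the intended one ($rd(K_n)\geq\lambda^{+}(K_n)=n-1$ via Lemma \ref{lem1} and the $n-1$ pairwise edge-disjoint $u$--$v$ paths), and your treatment of the even case via $\chi'(K_n)=n-1$ is immediate from Lemma \ref{lem1}. The only real work is the odd case, and your construction there is sound: coloring $\delta(w)$ monochromatically with a fresh color and properly coloring the remaining $K_{n-1}$ with the other $n-2$ colors makes $E_u$ a rainbow cut of size $n-1$ for every $u\neq w$, and since any pair $u,v$ contains such a vertex, the coloring is rainbow disconnecting with $n-1$ colors. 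This is precisely the ``extend a proper coloring of $G-w$ so that every $E_x$ with $x\neq w$ is rainbow'' device that the present paper uses repeatedly (in the proofs of Theorem \ref{thm1}, Theorem \ref{mini}, and Lemma \ref{n-3}), so your argument is stylistically consistent with the source even though the lemma is stated here without proof.
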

\noindent\textbf{Remark 1.} When $n=2$, $G=\{e\}$, then $rd(G)=1$ by Lemma \ref{tree};
when $n=3$, $G=C_3$, then $rd(G)=2$ Lemma \ref{cycle}.
And by Lemma \ref{kn}, we have for any integer $n\geq 2$, $rd(K_n)=n-1$.

\begin{lem}{\upshape\cite{M}}\label{lem2}
Let $G$ be a graph of order $n$ $(n\geq k+2 \geq 3)$. If
$e(G)>\frac{k+1}{2}(n-1)-\frac{1}{2}\sigma_k(G),$ where
$\sigma_k(G)=\sum\limits_{\mbox{\tiny $\begin{array}{c}
             x\in V(G) \\
             d(x)\le k \end{array}$}}(k-d(x))$, then
$\lambda^+(G)\geq k+1.$
\end{lem}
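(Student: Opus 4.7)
The plan is to prove the contrapositive: assuming $\lambda^+(G)\le k$, show $2e(G)+\sigma_k(G)\le(k+1)(n-1)$, which is equivalent to the lemma's bound since $2e(G)+\sigma_k(G)=\sum_{v\in V(G)}\max(d(v),k)$. I would proceed by strong induction on the pair $(n,e(G))$ in lexicographic order. A preliminary remark is that $\lambda^+(G)\le k$ already forces $\delta(G)\le k$: any leaf $v$ of a Gomory--Hu tree of $G$ gives the trivial cut $\{v\}\mid V(G)\setminus\{v\}$ of size $d_G(v)$, which equals the weight of its incident tree edge and is therefore at most $\lambda^+(G)\le k$. In particular the set $L:=\{v\in V(G):d(v)\le k\}$ is non-empty; set $H:=V(G)\setminus L$. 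The base case $n=k+2$ is dispatched by direct counting using $\delta(G)\le k$ and $e(G)\le\binom{k+2}{2}$.

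For the inductive step I would attempt, in order, three successive deletion moves. First, (a) if some edge $uv\in E(G)$ has both endpoints in $L$, delete $uv$: both endpoints remain in $L$, so $\sigma_k$ rises by exactly $2$ while $2e$ drops by $2$, the quantity $2e+\sigma_k$ is preserved, and the inductive hypothesis applied with one fewer edge closes the case. Second, (b) if some $v\in L$ has $d(v)\le 1$, delete $v$: a routine computation of the change in $\sigma_k$ under vertex removal shows the inductive hypothesis for $G-v$ yields $2e(G)+\sigma_k(G)\le(k+1)(n-1)$, with the tight situation being $d(v)=1$ and the neighbour of $v$ lying in $H$.

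The remaining case (c) is when $L$ is an independent set and every vertex of $L$ has degree at least $2$; this is the main obstacle. Neither deleting a single $L$--$H$ edge nor deleting a single vertex of $L$ preserves the invariant on the nose (both leave a positive slack of $+1$ or more). To handle case (c) I would exploit the Menger-type constraint that $\lambda^+(G)\le k$ imposes on the bipartite part $G[L,H]$: any common $L$-neighbours of two vertices $h_1,h_2\in H$ supply edge-disjoint length-two $h_1$--$h_2$ paths, so $|N(h_1)\cap N(h_2)\cap L|\le k$ and consequently $\sum_{v\in L}\binom{d(v)}{2}\le k\binom{|H|}{2}$. Combined with the identity $2e(G)+\sigma_k(G)=k|L|+2e(G[H])+e(L,H)$, which holds because $L$ is independent, and the inductive hypothesis applied to $G[H]$, which also satisfies $\lambda^+\le k$, the problem reduces to proving $e(L,H)\le|L|+\sigma_k(G[H])$. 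Establishing this last inequality---quite possibly by showing that the hypotheses of case (c) together with $\lambda^+(G)\le k$ are in fact contradictory, so that case (c) is vacuous---is the step I expect to be the main technical difficulty of the proof.
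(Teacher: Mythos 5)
The first thing to note is that the paper contains no proof of this statement to compare yours against: it is quoted verbatim as a theorem of Mader \cite{M}, so your attempt has to stand on its own as a proof of that theorem. As a proof it is incomplete, and the incompleteness sits exactly where the real content of the result lies. Your contrapositive reformulation $2e(G)+\sigma_k(G)=\sum_{v}\max(d(v),k)\le (k+1)(n-1)$ is correct, and I verified that your reductions (a) and (b) work as claimed: deleting an $L$--$L$ edge preserves $2e+\sigma_k$ and cannot increase $\lambda^+$, and deleting a vertex $v\in L$ with $d(v)\le 1$ loses at most $k+d(v)\le k+1$ from $2e+\sigma_k$ while the right-hand side drops by exactly $k+1$. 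But case (c) --- $L$ independent with all $L$-degrees at least $2$ --- is the whole theorem, and you explicitly leave it open. The target inequality $e(L,H)\le |L|+\sigma_k(G[H])$ is far from obvious (note $e(L,H)\ge 2|L|$ there, so you would need $\sigma_k(G[H])\ge |L|$, a nontrivial structural claim), and the alternative hope that case (c) is vacuous is likewise unproved; small examples suggest it is at least very constrained, but establishing that is again the substance of Mader's argument, not a routine check.

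There are also two smaller defects that would need repair even if case (c) were settled. First, applying the inductive hypothesis to $G[H]$ requires $|H|\ge k+2$, which nothing guarantees; the regime $2\le |H|\le k+1$ must be handled separately. Second, the base case $n=k+2$ does not follow from $\delta(G)\le k$ and $e(G)\le\binom{k+2}{2}$ alone: these give only $\sum_v\max(d(v),k)\le k+(k+1)^2$, which overshoots the target $(k+1)^2$ by $k$. You need an extra observation, e.g.\ that $\lambda^+(G)\le k$ forbids two vertices of degree $n-1=k+1$ (two such vertices would be joined by the edge between them together with $k$ paths through common neighbours, forcing $\lambda^+\ge k+1$), which caps the sum at $(k+1)+k(k+1)=(k+1)^2$. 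In short: the skeleton is a reasonable way to attack Mader's theorem, but the decisive case is missing, so the proposal does not constitute a proof.
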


\noindent$Proof ~of ~Conjecture ~\ref{conj1}.$ If $k=n-1$, we have the
maximum size of a connected of order $n$ with $rd(G)=n-1$ is
$\frac{n(n-1)}{2}$ since $rd(K_n)=n-1$ by Remark $1$. Obviously, the result
is true for $k=n-1$.
Now we consider that $1\leq k\leq n-2$. Suppose that
$e(G)>\frac{(k+1)(n-1)}{2}- \frac{1}{2}\sigma_k(G)$, then $rd(G)\geq
\lambda^+(G)\geq k+1$ by Lemma \ref{lem1} and Lemma \ref{lem2}.
Therefore, if $rd(G)=k$, then $e(G) \leq
\frac{(k+1)(n-1)}{2}-\frac{1}{2}\sigma_k(G) \leq
\frac{(k+1)(n-1)}{2}$ since $\sigma_{k}(G)$ is nonnegative.

It remains to show that for each pair $k$, $n$ of integers with
$1\leq k\leq n-2$ and $n\geq 5$ is odd, there exists a connected
graph $G_k$ with order $n$ and size $\frac{(k+1)(n-1)}{2}$ such that
$rd(G_k)=k$.

First, we construct $G_k$ as follows. Set $G_k=H_k \vee K_{1}$,
where $H_k$ is a $(k-1)$-regular graph of order $n-1$ and $K_{1}=\{
u \}$. Since $n-1$ is even, such graph $H_k$ exists. $G_k$ is a
connected graph of order $n$ having one vertex $u$ of degree $n-1$
and $n-1$ vertices of degree $k$, the size of $G_k$ is
$\frac{(k+1)(n-1)}{2}$.

Next, we prove that the rainbow disconnection number of $G_k$ equals
$k$. Since $H_k$ can be selected so that it is $1$-factorable,
$\chi'(H_k)=k-1$. We may obtain a proper $(k-1)$-edge-coloring $c_0$
of $H_k$ using colors from $\{1,2,...,k-1\}$. Extend the edge-coloring
$c_0$ to an edge-coloring $c$ of $G$ by assigning $c(e)=k$ for each
edge $e \in E(G) \backslash E(H_k)$. Under the edge-coloring $c$ of $G$,
the set $E_x$ of edges incident with $x$($x \neq u$) is a rainbow set.
For any two vertices $x$ and $y$ of $G_k$, at least one of $x$ and $y$
is not $u$, say $x \neq u$. We obtain $E_x$ is a $x-y$ rainbow cut,
hence $c$ is a rainbow disconnection coloring of $G$ using $k$
colors. So $rd(G_k)\leq k$.

Furthermore, we show that $rd(G_k)\geq k$. Note that the size of
$G_k$ is $\frac{(k+1)(n-1)}{2}$, and $\frac{(k+1)(n-1)}{2} >
\frac{k(n-1)}{2}\geq \frac{k(n-1)}{2}-\frac{1}{2}\sigma_{k-1}(G_k)$
since $\sigma_{k-1}(G_k)$ is nonnegative. Thus, $\lambda^+(G_k)\geq
k$ by Lemma \ref{lem2}. Combining with Lemma \ref{lem1}, we have
$rd(G_k)\geq k$. ~~~~~$\Box$

\section{The RD numbers of some classes of graphs }

In this section, we discuss the rainbow disconnection numbers of
complete multipartite graphs, critical graphs, minimal graphs with
respect to chromatic index and regular graphs.

First, we give the rainbow disconnection numbers of complete
multipartite graphs.
\begin{thm}\label{thm1}
Let $G=K_{n_1,n_2,...,n_k}$ be a complete $k$-partite graph with
order $n$ where $k\geq 2$ and $n_1\leq n_2\leq \cdots \leq n_k$.
Then
$$rd(K_{n_1,n_2,...,n_k})=
\begin{cases}
n-n_2& \text{if $n_1=1$},\\
n-n_1& \text{if $n_1\geq 2$}.
\end{cases}$$
\end{thm}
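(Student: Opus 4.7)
The plan is to establish matching lower and upper bounds on $rd(G)$, splitting the argument according to whether $n_1\geq 2$ or $n_1=1$. For the lower bound, I would invoke the chain $\lambda^+(G)\leq rd(G)$ from Lemma~\ref{lem1} and exhibit a pair of vertices realizing the claimed value of $\lambda^+$. When $n_1\geq 2$, pick any two vertices $u,w\in V_1$; they share all $n-n_1$ vertices of $V\setminus V_1$ as common neighbors, so the length-two paths $u$-$x$-$w$ (one per $x\in V\setminus V_1$) form $n-n_1$ pairwise edge-disjoint $u$-$w$ paths. Combined with the trivial bound $\lambda(u,w)\leq\min\{d(u),d(w)\}=n-n_1$, this forces $\lambda^+(G)\geq n-n_1$. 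When $n_1=1$, let $v$ be the unique vertex of $V_1$ and $w$ any vertex of $V_2$; the edge $vw$ together with the length-two paths $v$-$x$-$w$, one for each $x\in V_3\cup\cdots\cup V_k$, gives $1+(n-n_2-1)=n-n_2$ edge-disjoint $v$-$w$ paths, matching $\min\{d(v),d(w)\}=n-n_2$.

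For the upper bound, I would fix a vertex $v\in V_1$ and build a rainbow disconnection coloring of $G$ in two stages: first properly edge-color $G-v$, then extend by assigning each edge $vu$ a color that does not yet appear at $u$ in the coloring of $G-v$. Under such a coloring every vertex $u\neq v$ sees a rainbow set of incident edges, which is a rainbow cut separating $u$ from any other vertex; hence for every pair $\{x,y\}$ the vertex-cut at a non-$v$ endpoint serves as a rainbow $x$-$y$ cut. When $n_1=1$, $G-v=K_{n_2,\ldots,n_k}$ has maximum degree $n-n_2-1$, so Vizing's theorem yields $\chi'(G-v)\leq n-n_2$; for each $u\in V_i$ with $i\geq 2$, the inequality $d_{G-v}(u)=n-n_i-1<n-n_2$ leaves a free color in the palette $\{1,\ldots,n-n_2\}$ for $vu$. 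When $n_1\geq 2$, $G-v=K_{n_1-1,n_2,\ldots,n_k}$ has maximum degree $n-n_1$, and to achieve $rd(G)\leq n-n_1$ one requires $\chi'(G-v)=n-n_1$, that is, $G-v$ must be Class~1.

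The main obstacle is precisely this Class~1 claim in the case $n_1\geq 2$. Even when $G$ itself is overfull and hence Class~2 (as happens for $K_{3,3,3}$), deleting a vertex of $V_1$ strictly decreases the edge count while preserving the maximum degree, so $G-v$ typically ceases to be overfull and one expects $\chi'(G-v)=\Delta(G-v)=n-n_1$ from the theory of chromatic indices of complete multipartite graphs. Verifying this rigorously---either by invoking a classification theorem for Class~1 complete multipartite graphs or by producing an explicit $1$-factorization of $G-v$---is the key technical step. If a direct Class~1 argument fails in a boundary case, the weaker property of admitting an $(n-n_1)$-coloring that is proper at every vertex except $v$ still suffices for rainbow disconnection, providing a fallback via the chromatic index of the graph obtained by splitting $v$ into pendant edges.
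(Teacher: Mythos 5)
Your lower bounds and your treatment of the case $n_1=1$ are correct and essentially the paper's argument (delete the dominating vertex, apply Vizing to the rest, extend each pendant edge with a color missing at its other endpoint; the subcase $n_1=n_2=1$ also goes through since then $n-n_2=n-1$). The genuine gap is exactly where you flag it: for $n_1\geq 2$ you delete $v\in V_1$, so $G-v=K_{n_1-1,n_2,\ldots,n_k}$ still has maximum degree $n-n_1$, and you need $\chi'(G-v)=n-n_1$ but only assert that ``one expects'' this. As written, the theorem is not proved. The gap is fillable with a tool the paper already uses elsewhere: in $G-v$ every $x\in V_j$ with $j\geq 2$ has degree $n-n_j-1\leq n-n_1-1$, so the vertices of maximum degree $n-n_1$ are precisely those of $V_1\setminus\{v\}$, which form a nonempty independent set; hence $(G-v)_\Delta$ is a forest and not a disjoint union of cycles, and Lemma~\ref{classs} gives that $G-v$ is Class~1. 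With that one observation your extension step goes through, since every neighbour of $v$ lies outside $V_1$ and therefore has a spare color in $[n-n_1]$. Your proposed fallback via a full classification of Class~1 complete multipartite graphs (Hoffman--Rodger) would also work but imports a far heavier theorem than necessary.

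It is worth noting that the paper avoids the Class~1 question entirely by a different choice of deleted vertex: when some part has size greater than $n_1$ it deletes a vertex $u$ from such a part, which makes $\Delta(G-u)=n-n_1-1$, so Vizing alone already yields an $(n-n_1)$-edge-coloring; the equal-parts case is then handled by embedding $G$ into $K_{n_1,\ldots,n_k+1}$ and invoking monotonicity (Lemma~\ref{lem4}). Your decision to delete from the \emph{smallest} part is precisely what creates the obstruction. On the other hand, once repaired via Lemma~\ref{classs}, your argument treats all of Case~$n_1\geq 2$ uniformly, whereas the paper's embedding step, taken literally, only yields $rd(G)\leq rd(G^{*})=(n+1)-n_1$ for the equal-parts case, so your route (with the fix) is arguably the cleaner one.
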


For the proof of Theorem \ref{thm1} we give two lemmas as follows.
\begin{lem}{\upshape\cite{CDHHZ}}\label{lem4}
Let $H$ be a connected subgraph of a graph $G$. Then $rd(H)\leq rd(G)$.
\end{lem}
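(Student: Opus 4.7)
The plan is to show that the restriction of any rd-coloring of $G$ to $E(H)$ is already a rainbow disconnection coloring of $H$, which immediately gives $rd(H)\le rd(G)$. So first I would fix an rd-coloring $c$ of $G$ using exactly $rd(G)$ colors and set $c' = c|_{E(H)}$.

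To verify $c'$ works, pick any two vertices $u,v\in V(H)$. Since $c$ is an rd-coloring of $G$, there is a $u$-$v$ rainbow cut $R$ in $G$ under $c$. Any such edge-cut arises from a vertex bipartition $V(G)=A\cup B$ with $u\in A$, $v\in B$, where $R$ is exactly the set of edges with one endpoint in each part. I would then set $R' = R\cap E(H)$ and claim $R'$ is a $u$-$v$ rainbow cut of $H$ with respect to $c'$. It is rainbow as a subset of the rainbow set $R$. Moreover, $R'$ is precisely the set of edges of $H$ with one endpoint in $A\cap V(H)$ and the other in $B\cap V(H)$, so in $H-R'$ there is no edge between these two vertex classes; hence $u$ and $v$ lie in different components of $H-R'$. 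Connectedness of $H$ then forces any $u$-$v$ path in $H$ to contain an edge of $R'$, so $R'\neq\emptyset$ is a genuine $u$-$v$ edge-cut of $H$.

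Since $u,v\in V(H)$ were arbitrary, $c'$ is a rainbow disconnection coloring of $H$ with at most $rd(G)$ colors, yielding $rd(H)\le rd(G)$. There is essentially no technical obstacle here; the only point that requires care is the conceptual observation that the vertex bipartition $(A,B)$ witnessing the cut in $G$ restricts to the vertex bipartition $(A\cap V(H),\,B\cap V(H))$ witnessing the cut in $H$, and that connectedness of $H$ is exactly what guarantees the restricted cut is nontrivial.
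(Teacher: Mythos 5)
Your proposal is correct. Note that the paper itself gives no proof of this lemma; it is quoted from reference \cite{CDHHZ}, so there is nothing internal to compare against, but your restriction argument is the natural one: restrict an rd-coloring of $G$ to $E(H)$, and for $u,v\in V(H)$ intersect a $u$--$v$ rainbow cut $R$ of $G$ with $E(H)$. The one imprecise step is your assertion that \emph{any} $u$--$v$ edge-cut $R$ is \emph{exactly} the set of edges crossing some bipartition $(A,B)$; in general $R$ may properly contain that crossing set (take $A$ to be the component of $u$ in $G-R$, so that $[A,B]\subseteq R$ but equality need not hold). This does not damage the argument, since $R'=R\cap E(H)$ then still contains every edge of $H$ joining $A\cap V(H)$ to $B\cap V(H)$, so $H-R'$ has no such edge and $u,v$ lie in different components; being a subset of $R$, $R'$ is rainbow, and connectedness of $H$ guarantees $R'\neq\emptyset$. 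With that small correction the proof is complete and yields $rd(H)\le rd(G)$.
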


\begin{lem}{\upshape\cite{CDHHZ}}\label{lem3}
Let $G$ be a nontrivial connected graph of order $n$. Then $rd(G)=n-1$
if and only if $G$ contains at least two vertices of degree $n-1$.
\end{lem}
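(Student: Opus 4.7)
The plan is to prove the two directions separately.

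For the easy direction ($\Leftarrow$), suppose $u, v \in V(G)$ both have degree $n-1$. Since $G$ is a spanning connected subgraph of $K_n$, Lemma~\ref{lem4} combined with Lemma~\ref{kn} gives $rd(G) \leq rd(K_n) = n-1$. For the matching lower bound, both $u$ and $v$ are adjacent to every other vertex, so $uv \in E(G)$, and for each $w \in V(G) \setminus \{u, v\}$ the length-two path $u$--$w$--$v$ exists. These $n-1$ pairwise edge-disjoint $u$--$v$ paths (the direct edge together with the $n-2$ length-two paths) give $\lambda(u,v) \geq n-1$, whence $\lambda^+(G) \geq n-1$, and Lemma~\ref{lem1} yields $rd(G) \geq n-1$.

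For the converse ($\Rightarrow$), I argue the contrapositive: assume $G$ has at most one vertex of degree $n-1$ and construct a rainbow disconnection coloring using $n-2$ colors. The unifying strategy is to build an edge-coloring under which every vertex, with at most one distinguished exception $v^*$, has a \emph{rainbow star} (its incident edges all receive distinct colors). Then for any pair $\{x, y\}$ at least one of $x, y$ is not $v^*$, and its rainbow star is a rainbow $x$--$y$ cut.

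If $G$ has a unique vertex $u$ of degree $n-1$, set $v^* = u$. Each $v \neq u$ is adjacent to $u$ and satisfies $d_G(v) \leq n-2$, so $d_{G-u}(v) \leq n-3$, and Vizing's theorem gives $\chi'(G - u) \leq n-2$. Properly color $E(G - u)$ using $\{1, \dots, n-2\}$, then for each edge $uv$ assign any color in $\{1, \dots, n-2\}$ unused at $v$ in $G - u$ (one exists since $d_{G-u}(v) < n-2$). Every $v \neq u$ now has a rainbow star.

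In the remaining case $\Delta(G) \leq n-2$, a proper edge-coloring already suffices whenever $\chi'(G) \leq n-2$. The main obstacle is the sub-case $\Delta(G) = n-2$ with $G$ of Class~$2$ (so $\chi'(G) = n-1$). There I plan to pick a sacrificial $v^*$ adjacent to every vertex of degree $n-2$; then $\Delta(G - v^*) \leq n-3$ and the Case-1 construction carries over with $v^*$ in the role of $u$. Existence of $v^*$ follows from a short analysis of the ``unique non-neighbor'' map on $V_\Delta := \{v : d_G(v) = n-2\}$: either some $u \in V_\Delta$ has its non-neighbor outside $V_\Delta$ and one takes $v^* := u$; or the map is a fixed-point-free involution on $V_\Delta$, in which case every $u \in V_\Delta$ is adjacent to all of $V \setminus V_\Delta$, so any $v^* \in V \setminus V_\Delta$ works, unless $V_\Delta = V$, in which case $G = K_n - M$ for a perfect matching $M$, which is Class~$1$ and hence already covered by the preceding step.
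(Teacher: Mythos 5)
The paper quotes this lemma from \cite{CDHHZ} without reproducing a proof, so there is no internal argument to compare against; judged on its own, your proof is correct. The ($\Leftarrow$) direction is fine: $rd(G)\leq rd(K_n)=n-1$ by Lemmas~\ref{lem4} and \ref{kn}, and the edge $uv$ together with the $n-2$ paths $u$--$w$--$v$ gives $\lambda^+(G)\geq\lambda(u,v)\geq n-1$, so Lemma~\ref{lem1} closes the gap. For ($\Rightarrow$), your ``all stars rainbow except at one sacrificial vertex'' construction is precisely the device this paper uses over and over (in the proofs of Theorem~\ref{thm1} and Lemma~\ref{n-3}): apply Vizing to $G-v^*$ once $\Delta(G-v^*)\leq n-3$, then extend each edge $v^*x$ with a colour missing at $x$. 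Your case analysis is exhaustive (the only non-immediate situation is $\Delta(G)=n-2$ with $G$ of Class~2), and the unique-non-neighbour argument is sound: if some $u\in V_\Delta$ has its non-neighbour outside $V_\Delta$, or if $V\setminus V_\Delta\neq\emptyset$ while the non-neighbour map is a fixed-point-free involution on $V_\Delta$, a suitable $v^*$ exists; otherwise $G=K_n-M$. The one step you should spell out is why $K_n-M$ is Class~1: every perfect matching of $K_{2m}$ is contained in some one-factorization (take any one-factorization and push it forward by an automorphism carrying one factor onto $M$), so $\chi'(K_n-M)=n-2$; note that Lemma~\ref{classs} cannot be used here since $G_\Delta=G$ is far from unicyclic. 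With that one sentence added, the argument is complete and entirely consistent with the toolkit of the paper.
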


\noindent$Proof~ of~ Theorem~ \ref{thm1}.$ Let $V_1,V_2,\ldots V_k$
be the partite vertex sets of $G$ with
$V_i=\{v_{i,1},v_{i,2},\ldots, v_{i,n_i}\}$ where $1\leq i\leq k$.

\textbf{Case 1.}
$n_1=n_2=1$.

In this case, we have $V_1=\{v_{1,1}\}$, $V_2=\{v_{2,1}\}$ and
$d(v_{1,1})=d(v_{2,1})=n-1$.
Then the graph $G$ has at least two vertices of degree $n-1$,
so $rd(G)=n-1$ by Lemma \ref{lem3}.

\textbf{Case 2.}
$n_1=1$ and $n_2\geq 2$.

First, we have $V_1=\{v_{1,1}\}$ and $d(v_{1,1})=n-1$. Let
$H=G-\{v_{1,1}\}$. Then $\Delta(H) = n-n_2-1$. Since $\chi'(H) \leq
\Delta(H)+1\leq n-n_2$ by Vizing theorem \cite{V}, there is a proper
edge-coloring $c_0$ of $H$ using $n-n_2$ colors. For each vertex
$x\in V(H)$, $d_H(x)\leq n-n_2-1$, at least one of the $n-n_2$
colors is missing from the colors of the edges incident with $x$ in
$H$. Let $a_{x}$ be one such missing color. Since $E(G)=E(H)\cup
\{v_{1,1}x \mid x\in V(H)\}$, we now extend $c_0$ to an
edge-coloring $c$ of $G$ by assigning $c(v_{1,1}x)=a_x$ for each
vertex $x\in V(H)$. Note that the set $E_x$ of edges incident
with $x$ is a rainbow set for each vertex $x\in V(H)$. Let $u$ and
$v$ be two vertices of $G$. Then at least one of $u$ and $v$ belongs
to $H$, say $u\in V(H)$. $E_u$ is a $u-v$ rainbow cut, it follows
that $c$ is a rainbow disconnection coloring of $G$ using $n-n_2$
colors. Therefore, $rd(G)\leq n-n_2$.

For any two vertices $u$, $v$ of $V_2$, they are adjacent with all
the vertices of $V(G)\setminus V_2$, that is, $\lambda(u,v) \geq
n-n_2$. It follows by Lemma \ref{lem1} that $rd(G) \geq n-n_2$.
Hence, $rd(G)=n-n_2$.

\textbf{Case 3.}
$n_1\geq 2$.

\textbf{Case 3.1.} The number of vertices of $k$ partite set is not
completely equal. First, let $i$ be the minimum value such that $n_i
\neq n_1$. We have $n_i\geq n_1+1$ where $i\geq 2$. Let $u$ be a vertex
of $V_i$ and $F=G-u$. Then $\Delta(F) = n-n_1-1$. Since $\chi'(F)
\leq \Delta(F)+1\leq n-n_1$ by Vizing theorem \cite{V}, there is a
proper edge-coloring $c_0$ of $F$ using $n-n_1$ colors. For each
vertex $x\in V(F)$, $d_F(x)\leq n-n_1-1$, similarly, there is a
$a_x\in [n-n_1]$ such that $a_x$ is not assigned to any edge
incident with $x$ in $F$. Since $E(G)=E(F)\cup \{ux\mid x\in
N_G(u)\}$, we now extend the edge-coloring $c_0$ of $F$ to an
edge-coloring $c$ of $G$ by assigning $c(ux)=a_x$ for any vertex
$x\in N_G(u)$. Likewise, the set $E_x$ of edges incident with $x$ is
rainbow for each vertex $x\in V(F)$. Let $v$ and $w$ be two vertices
of $G$. Then at least one of $v$ and $w$ belongs to $F$, say $v\in
V(F)$. Since $E_v$ is a $v-w$ rainbow cut, $c$ is a rainbow
disconnection coloring of $G$ using $n-n_1$ colors. Therefore,
$rd(G)\leq n-n_1$.

For any two vertices of $V_1$, all vertices of $V(G)\setminus V_1$
are their common neighbours. Then $\lambda^{+}(G)\geq n-n_1$, it
follows by Lemma \ref{lem1} that $rd(G) \geq n-n_1$. Hence,
$rd(G)=n-n_1$.

\textbf{Case 3.2.} The number of vertices of $k$ partite set is
equal. That is $n_1=n_2=\cdots=n_k\geq 2$. Now, we construct a graph
$G^{*}=K_{n_1,n_2,...,n_k+1}$ be a complete $k$-partite graph. Then
it follows by Case $3.1$ that $rd(G^{*})=n-n_1$. Furthermore, since
$G$ is a subgraph of $G^{*}$, $rd(G)\leq rd(G^{*})\leq n-n_1$  by
Lemma \ref{lem4}. Similarly, for any two distinct vertices of $V_1$,
all vertices of $V(G)\setminus V_1$ are their common neighbours. Then
$\lambda^+(G)\geq n-n_1$, it follows by Lemma \ref{lem1} that $
rd(G)\geq n-n_1$. Hence,
$rd(G)=n-n_1$.~~~~~~~~~~~~~~~~~~~~~~~~~~~~~~~~~~~~~~~~~~~~~~~~~~~~~~~~~~~~~~
~~~~~~~~~~~~~~~~~~~~~~~~~~~~~~~~~~~~~~~~~~~~~~~~~~~~~~~~~~~~~~$\Box$

A graph $G$ is said to be \emph{colour-critical} if
$\chi(H)<\chi(G)$ for every proper subgraph $H$ of $G$. The study of
critical $k$-chromatic graphs was started by Dirac (\cite{D1},
\cite{D2}). Here, for simplicity, we abbreviate the term
``color-critical" to ``critical". A $k$-\emph{critical} graph is one
that is $k$-chromatic and critical. We get a lower bound of rainbow
disconnection number for $(k+1)$-critical graph.

\begin{thm}\label{thm2}
Let $G$ be a connected $(k+1)$-critical graph. Then $rd(G)\geq k$.
\end{thm}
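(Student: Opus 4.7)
The plan is to reduce the theorem to a well-known structural property of colour-critical graphs and then invoke Lemma~\ref{lem1}. Specifically, I would use Dirac's classical theorem on the edge-connectivity of critical graphs, which states that every $k$-critical graph is $(k-1)$-edge-connected. Applied to our $(k+1)$-critical graph $G$, this yields $\lambda(G) \geq k$.

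Concretely, my proof would proceed in three short steps. First, recall that because $G$ is $(k+1)$-critical, for every edge $e = xy$ of $G$ the graph $G - e$ admits a proper $k$-colouring; in any such colouring, $x$ and $y$ must receive the same colour, otherwise the colouring would extend to $G$. This colour-class structure is the usual ingredient in Dirac's argument that no edge-cut of size less than $k$ can separate $G$: if $[S, \bar S]$ were an edge-cut with fewer than $k$ edges, one could independently $k$-colour $G[S]$ and $G[\bar S]$ (both being proper subgraphs, hence $k$-colourable) and then permute colours on $\bar S$ to reconcile the few cut edges, producing a proper $k$-colouring of $G$---a contradiction. Second, cite (or briefly restate) this theorem of Dirac to conclude $\lambda(G) \geq k$. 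Third, apply Lemma~\ref{lem1}: since $\lambda(G) \leq \lambda^{+}(G) \leq rd(G)$, we obtain $rd(G) \geq k$, as required.

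The main (and essentially only) obstacle is that the proof hinges on the Dirac edge-connectivity theorem rather than on the elementary minimum-degree bound $\delta(G) \geq k$ for $(k+1)$-critical graphs. The minimum-degree bound alone is not enough, because Lemma~\ref{lem1} compares $rd(G)$ with $\lambda(G)$ and $\lambda^{+}(G)$, and one only has $\lambda(G) \leq \delta(G)$ in general. So the crucial point---and the only nontrivial step---is upgrading ``min degree $\geq k$'' to ``edge-connectivity $\geq k$'' via the criticality hypothesis. Once this upgrade is in hand, the rest of the argument is a one-line application of Lemma~\ref{lem1}.
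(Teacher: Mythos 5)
Your proof is correct, but it takes a genuinely different route from the paper. You invoke Dirac's theorem that every $(k+1)$-critical graph is $k$-edge-connected, so $\lambda(G)\geq k$, and then conclude via the chain $\lambda(G)\leq\lambda^{+}(G)\leq rd(G)$ of Lemma~\ref{lem1}. The paper instead uses only the elementary minimum-degree bound $\delta(G)\geq k$ for $(k+1)$-critical graphs (Lemma~\ref{cri}), observes that the average degree $d$ then satisfies $d\geq\delta(G)\geq k$, and applies Lemma~\ref{deg}, which gives $rd(G)\geq\lfloor d\rfloor\geq k$. Note that your closing claim --- that the minimum-degree bound alone cannot suffice because Lemma~\ref{lem1} only relates $rd(G)$ to $\lambda(G)$ and $\lambda^{+}(G)$ --- is mistaken in the context of this paper: Lemma~\ref{deg} is precisely the bridge from degree information to $\lambda^{+}(G)$, since it rests on Mader's extremal theorem (Lemma~\ref{lem2}) relating edge count to $\lambda^{+}$. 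So the ``upgrade'' you regard as the essential nontrivial step is not actually needed. Your approach buys a self-contained argument that bypasses Mader's theorem at the cost of citing (and sketching) the stronger edge-connectivity theorem of Dirac, whose proof via colour permutations you only outline; the paper's approach is shorter given the machinery it has already established. Both are valid proofs of the statement.
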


We proceed our proof by the following two lemmas. First,
we give a lower bound of rainbow disconnection number on average
degree of $G$.
\begin{lem}\label{deg}
Let $G$ be a connected graph of order $n$ with average degree $d$.
Then $rd(G)\geq \lfloor d\rfloor$.
\end{lem}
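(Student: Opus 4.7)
The plan is to deduce the bound directly from the two lemmas already at our disposal: combine $rd(G) \ge \lambda^+(G)$ from Lemma \ref{lem1} with the edge-count criterion for $\lambda^+(G)$ in Lemma \ref{lem2}. Since $d = 2e(G)/n$, the average-degree hypothesis translates to a lower bound on $e(G)$, and checking that this bound exceeds the threshold in Lemma \ref{lem2} is essentially a one-line calculation.

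Concretely, I would set $k = \lfloor d\rfloor - 1$, so that the goal $rd(G) \ge \lfloor d\rfloor$ reduces, via Lemma \ref{lem1}, to showing $\lambda^+(G) \ge k+1$. Before invoking Lemma \ref{lem2} I first dispose of the degenerate ranges. If $\lfloor d\rfloor \le 1$ the inequality $rd(G) \ge \lfloor d\rfloor$ is immediate because a nontrivial connected graph has at least one edge, giving $rd(G) \ge 1$. Thus I may assume $\lfloor d\rfloor \ge 2$, i.e.\ $k \ge 1$. The hypothesis $n \ge k+2$ of Lemma \ref{lem2} is then automatic: since $G$ is simple we have $d \le n-1$, whence $k+2 = \lfloor d\rfloor + 1 \le n$.

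Next I would verify the edge-count hypothesis of Lemma \ref{lem2}. Because $\sigma_k(G) \ge 0$, it suffices to check
\[
e(G) \;>\; \tfrac{k+1}{2}(n-1) \;=\; \tfrac{\lfloor d\rfloor}{2}(n-1).
\]
From $d = 2e(G)/n$ we get $e(G) = dn/2 \ge \lfloor d\rfloor\, n/2$, and since $\lfloor d\rfloor \ge 2 > 0$,
\[
e(G) \;\ge\; \tfrac{\lfloor d\rfloor\, n}{2} \;>\; \tfrac{\lfloor d\rfloor(n-1)}{2}.
\]
Lemma \ref{lem2} then yields $\lambda^+(G) \ge k+1 = \lfloor d\rfloor$, and Lemma \ref{lem1} gives $rd(G) \ge \lambda^+(G) \ge \lfloor d\rfloor$.

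There is no real obstacle here; the only subtle point is cleanly handling the boundary cases $\lfloor d\rfloor \in \{0,1\}$ where Lemma \ref{lem2} does not apply (because it requires $k \ge 1$), together with verifying that the hypothesis $n \ge k+2$ is met for a simple graph. Both are handled in one sentence each by the degree bound $d \le n-1$ and by the fact that a connected graph of order $\ge 2$ has $rd(G) \ge 1$.
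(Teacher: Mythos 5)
Your proof is correct and follows essentially the same route as the paper: both reduce the claim to Lemma \ref{lem2} with $k=\lfloor d\rfloor-1$ via the computation $e(G)=dn/2\geq\lfloor d\rfloor n/2>\lfloor d\rfloor(n-1)/2$, after first disposing of the degenerate case (the paper phrases it as ``$G$ is a tree,'' you as ``$\lfloor d\rfloor\leq 1$,'' which are equivalent for connected graphs). Your version is in fact slightly more careful, since you explicitly verify the hypothesis $n\geq k+2$ of Lemma \ref{lem2}, which the paper leaves implicit.
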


\begin{proof}
If $G$ is a tree, then $1\leq d<2$ since $d=\frac{2(n-1)}{n}$. And
we have $rd(G)=1$ by Lemma \ref{tree}. Obviously $rd(G)=1\geq\lfloor
d\rfloor$ and the result is true. If $G$ is not a tree, then $d\geq 2$
since $\frac{2e(G)}{n}\geq \frac{2n}{n}=2$. We have $e(G)=\frac{1}{2}dn
\geq\frac{1}{2}\lfloor d\rfloor n>\frac{1}{2}\lfloor d\rfloor(n-1)$,
so $\lambda^+(G)\geq \lfloor d\rfloor$ by Lemma \ref{lem2}.
Therefore, $rd(G)\geq \lfloor d\rfloor$ by Lemma \ref{lem1}.
\end{proof}

\begin{lem}{\upshape\cite{D1}}\label{cri}
Let $G$ be a connected $(k+1)$-critical graph. Then $\delta(G)\geq k$.
\end{lem}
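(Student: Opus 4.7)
The plan is to prove Lemma \ref{cri} by a direct contradiction argument, which is the standard approach for Dirac's theorem on critical graphs. The idea is to exploit the definition of criticality to get a small chromatic number on a well-chosen subgraph, and then recolor the removed vertex using the degree hypothesis.

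First I would set up the contradiction: suppose, to the contrary, that $\delta(G) \leq k-1$, and let $v \in V(G)$ be a vertex with $d_G(v) = \delta(G)$. Since $G$ is $(k+1)$-chromatic we have $\chi(G) = k+1 \geq 2$, so $G$ has at least two vertices, which means $G - v$ is a proper subgraph of $G$. By the definition of color-critical, $\chi(G - v) < \chi(G) = k+1$, hence $\chi(G - v) \leq k$.

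Next, I would fix a proper $k$-edge-coloring (actually, proper vertex-coloring) $\varphi$ of $G - v$ using colors from $\{1, 2, \ldots, k\}$. Because $d_G(v) \leq k - 1$, the neighbors of $v$ in $G$ occupy at most $k - 1$ of the $k$ available colors under $\varphi$. Therefore, there is at least one color $c^{*} \in \{1, 2, \ldots, k\}$ that is not used on any neighbor of $v$. Extend $\varphi$ to $G$ by assigning color $c^{*}$ to $v$; this yields a proper $k$-coloring of $G$, contradicting $\chi(G) = k+1$. Hence $\delta(G) \geq k$.

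There is no real obstacle here beyond the small bookkeeping of ensuring $G - v$ is a genuine proper subgraph so that criticality applies. The only borderline case is $k = 0$, where $(k+1)$-critical means $G = K_1$ and the conclusion $\delta(G) \geq 0$ is trivial, so we may harmlessly assume $k \geq 1$. The heart of the argument is simply that a minimum-degree vertex can always be ``greedily'' reinserted into a coloring of the rest of the graph whenever its degree is strictly less than the chromatic number, which is exactly the content of Dirac's classical observation.
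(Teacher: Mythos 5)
Your proof is correct and is exactly the classical argument of Dirac for this fact; the paper itself gives no proof, simply citing \cite{D1}, and your greedy-recoloring contradiction is the standard proof found there.
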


\noindent$Proof~ of~ Theorem ~\ref{thm2}.$ Let $G$ be a
$(k+1)$-critical with the average degree $d$. We know $\delta(G)\geq k$
by Lemma \ref{cri}. Obviously, $d\geq \delta(G)\geq k$. Therefore, it follows by
Lemma \ref{deg} that $rd(G)\geq \lfloor d\rfloor \geq k$ since $k$ is an integer.
$~~~~~~~~~~~~~~~~~~~~~~~~~~~~~~~~~~~~~~~~~~~~~~~~~\Box$

A graph $G$ with at least two edges is \emph{minimal} with respect
to chromatic index if $\chi'(G-e) = \chi'(G)-1$ for any edge $e$ of
$G$. We show that the rainbow disconnection number of connected
minimal graphs with respect to chromatic index is less than maximum
degree.

\begin{thm}\label{mini}
Let $G$ be a connected minimal graph with respect to chromatic
index. Then $rd(G)\leq \Delta(G)$.
\end{thm}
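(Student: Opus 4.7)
The plan is to split on whether $G$ is Class 1 or Class 2. If $\chi'(G)=\Delta(G)$, then Lemma \ref{lem1} immediately yields $rd(G)\le \chi'(G)=\Delta(G)$, so nothing more is needed. The substantive case is $\chi'(G)=\Delta(G)+1$, where the bound from Lemma \ref{lem1} only gives $rd(G)\le \Delta(G)+1$, and we must save one color using the minimality hypothesis.

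So assume $G$ is Class 2. Pick any edge $e=xy$. By the minimality assumption $\chi'(G-e)=\chi'(G)-1=\Delta(G)$, so $G-e$ admits a proper edge-coloring $c_0$ using the color set $[\Delta(G)]$. At the endpoint $x$ we have $d_{G-e}(x)=d_G(x)-1\le \Delta(G)-1$, so some color $\alpha\in[\Delta(G)]$ is missing among the $c_0$-colors of edges incident with $x$ in $G-e$. Extend $c_0$ to a coloring $c$ of $G$ by setting $c(e)=\alpha$; this uses only $\Delta(G)$ colors in total. The resulting $c$ is not guaranteed to be proper (the color $\alpha$ may already appear at $y$), but properness is not needed, only rainbow disconnection.

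The key observation is that for every vertex $z\neq y$, the edge set $E_z$ of edges incident to $z$ is rainbow under $c$: if $z\notin\{x,y\}$, this is just propriety of $c_0$ at $z$; if $z=x$, the $d_G(x)-1$ edges of $E_x\setminus\{e\}$ still use distinct colors from $c_0$, and we appended the missing color $\alpha$, so all $d_G(x)$ colors on $E_x$ are distinct. Since $E_z$ is always an edge-cut separating $z$ from any other vertex, we obtain a rainbow $u$--$v$ cut for every pair $\{u,v\}$ as follows: if $y\notin\{u,v\}$, take $E_u$; if $y\in\{u,v\}$, say $v=y$, then $u\neq y$, so $E_u$ is rainbow and separates $u$ from $y$. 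This shows $c$ is a rainbow disconnection coloring with $\Delta(G)$ colors, giving $rd(G)\le\Delta(G)$.

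I do not foresee a genuine obstacle; the only delicate point is recognizing that one does not need a proper edge-coloring of all of $G$, just rainbow edge-cuts, so it suffices to make the trivial cut $E_x$ at one endpoint of $e$ rainbow, which is always possible because $d_G(x)-1<\Delta(G)$ leaves a free color for $e$. The hypothesis ``at least two edges'' in the definition of minimality is used implicitly so that $e$ exists and $G-e$ is a well-defined nonempty graph to which Vizing-type reasoning applies.
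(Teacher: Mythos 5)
Your proof is correct, and the heart of it --- delete one edge $e=xy$, properly color $G-e$ with $\Delta(G)$ colors using the minimality hypothesis, give $e$ a color missing at $x$, and then use the trivial cuts $E_z$ for $z\neq y$ --- is exactly the construction in the paper's Case 2. The difference is in how the case split is organized: the paper invokes the characterization of minimal graphs (Lemma \ref{min}), which says that a Class 1 minimal graph must be the star $K_{1,d}$ (handled via $rd(K_{1,d})=1$) and that in the Class 2 case $G-e$ is Class 1 for every edge; you instead split directly on Vizing's dichotomy and dispose of the Class 1 case with the one-line bound $rd(G)\le\chi'(G)=\Delta(G)$ from Lemma \ref{lem1}, while in the Class 2 case you extract $\chi'(G-e)=\Delta(G)$ straight from the definition of minimality rather than from the characterization. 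Your route is therefore slightly leaner and self-contained (no appeal to Lemma \ref{min}, and no need to choose $e$ incident to a maximum-degree vertex, which the paper does but never uses). What the paper's route buys in exchange is the sharper conclusion $rd(G)=1<\Delta(G)$ in the Class 1 case; for the stated inequality this extra information is not needed.
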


The following lemma will be used for the proof of Theorem \ref{mini}.

\begin{lem}{\upshape\cite{B}}\label{min}
Let $G$ be a connected graph with $\Delta(G)=d\geq 2$. Then $G$ is
minimal with respect to chromatic index if and only if either:

i) $G$ is Class $1$ and $G = K_{1,d}$ or

ii) $G$ is Class $2$ and $G-e$ is Class $1$ for every edge $e$ of $G$.
\end{lem}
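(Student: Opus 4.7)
The plan is to prove the equivalence by treating the two directions separately, and in each direction splitting according to whether $G$ is Class 1 or Class 2. Everything ultimately reduces to Vizing's bound $\chi'(H)\in\{\Delta(H),\Delta(H)+1\}$ together with a standard structural fact about edge-chromatic critical graphs.

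For the ``if'' direction, case (i) is a direct computation: $\chi'(K_{1,d})=d$ and $\chi'(K_{1,d}-e)=\chi'(K_{1,d-1})=d-1$, so $K_{1,d}$ is minimal. In case (ii) we have $\chi'(G)=d+1$, and for every edge $e$, $\chi'(G-e)=\Delta(G-e)$ by the Class 1 hypothesis. I then rule out $\Delta(G-e)=d-1$: if that occurred, a proper $(d-1)$-edge-coloring of $G-e$ together with a fresh color on $e$ would produce a proper $d$-edge-coloring of $G$, contradicting $\chi'(G)=d+1$. Hence $\Delta(G-e)=d$ and $\chi'(G-e)=d=\chi'(G)-1$, establishing minimality.

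For the ``only if'' direction, assume $G$ is minimal with $\Delta(G)=d\geq 2$, and let $V_d$ denote the set of max-degree vertices. If $G$ is Class 1 then $\chi'(G)=d$ and $\chi'(G-e)=d-1$, so $\Delta(G-e)\leq d-1$ for every $e$; equivalently, every edge must contain $V_d$ among its endpoints. The subcase $|V_d|\geq 3$ is impossible since an edge has only two endpoints, and $|V_d|=2$ would force $G$ to consist of a single edge between the two max-degree vertices, contradicting $|E(G)|\geq 2$. Hence $|V_d|=1$, every edge is incident to the unique max-degree vertex $v$ of degree $d$, and $G=K_{1,d}$, giving (i). If instead $G$ is Class 2 then $\chi'(G)=d+1$ and $\chi'(G-e)=d$, so $\Delta(G-e)\in\{d-1,d\}$, and the case $\Delta(G-e)=d$ yields $G-e$ Class 1 immediately.

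The main obstacle is ruling out $\Delta(G-e)=d-1$ in the Class 2 subcase of the ``only if'' direction. I would invoke Vizing's adjacency lemma, whose standard consequence is that every edge-chromatic critical (and hence minimal) Class 2 graph satisfies $|V_d|\geq 3$. Since $\Delta(G-e)=d-1$ would force $V_d\subseteq\{u,v\}$ with $e=uv$, this contradicts $|V_d|\geq 3$; therefore $\Delta(G-e)=d$ and $G-e$ is Class 1 for every edge $e$, yielding (ii). The remaining steps are routine accounting with $\Delta$ and $\chi'$ via Vizing's upper bound.
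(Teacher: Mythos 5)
Your proof is correct, but note that the paper does not prove this statement at all: Lemma~\ref{min} is quoted from the reference \cite{B} and used as a black box, so there is no in-paper argument to compare against. Your two easy directions (the computation for $K_{1,d}$, the observation that $\Delta(G-e)\in\{d-1,d\}$, and the counting of max-degree vertices forced by $\Delta(G-e)\le d-1$ in the Class~$1$ case) are all sound, and you correctly identify the one genuinely nontrivial step, namely excluding $\Delta(G-e)=d-1$ when $G$ is Class~$2$ and minimal. Your route through Vizing's adjacency lemma works: minimal plus Class~$2$ plus connected is exactly the standard notion of an edge-chromatic critical graph, and the corollary that such a graph has at least three vertices of maximum degree does contradict $V_d\subseteq\{u,v\}$. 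A slightly lighter alternative, which stays inside the toolkit this paper already imports, is to note that $V_d\subseteq\{u,v\}$ makes $G_\Delta$ a forest on at most two vertices, so $G$ would be Class~$1$ by Fournier's theorem (of which Lemma~\ref{classs} is a strengthening), contradicting the Class~$2$ hypothesis directly; this avoids invoking the adjacency lemma. Either way, your argument is complete and fills a gap the paper leaves to the literature.
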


\noindent$Proof~ of~ Theorem ~\ref{mini}.$  Let $G$ be a minimal
connected graph with respect to chromatic index. We distinguish the
following two cases according to Lemma $\ref{min}$.

\textbf{Case 1.} $G$ is Class $1$ and $G=K_{1,d}$ with $d\geq2$. It
follows that $rd(G)=1$ by Lemma \ref{tree}, obviously $rd(G)<
d=\Delta(G)$.

\textbf{Case 2.} $G$ is Class $2$ and for any edge $e \in E(G)$,
$\chi'(G-e)=\Delta(G-e)$. We pick one vertex $v \in V(G)$ such that
$d_{G}(v)= \Delta(G)$. Let $H=G-uv$ for some vertex $u \in N_{G}(v)$.
Then $\chi'(H)=\Delta(H)$ and $\chi'(H)=\chi'(G)-1=\Delta(G)$
since $G$ is minimal with respect to chromatic index and $G$ is Class $2$.
Thus, it implies that $\chi'(H)=\Delta(H)=\Delta(G)$. First we obtain a
proper edge-coloring $c_0$ of $H$ using colors from
$[\Delta(G)]=\{1,2,\cdots,\Delta(G)\}$.
Since $d_{H}(v) <
\Delta(G)$, there is a $a_v\in[\Delta(G)]$ such that $a_v$ is not
assigned to any edge incident with $v$ in $H$. Now we extend $c_0$
to an edge-coloring $c$ of $G$ by defining $c(uv)=a_v$. Note that
the set $E_x$ of edges incident with $x$ in $G$ is a rainbow set for
each vertex $x\in V(G)\setminus u$ in both cases. Let $p$ and $q$
be two vertices of $G$. Then at least one of $p$ and $q$ is not $u$,
say $p\neq u$. Since $E_p$ is a $p-q$ rainbow cut, $c$ is a rainbow
disconnection coloring of $G$ using at most $\Delta(G)$ colors.
Therefore, $rd(G)\leq \Delta(G)$.
$~~~~~~~~~~~~~~~~~~~~~~~~~~~~~~~~~~~~~~~~~~~~~~~~~~~~~~~~~~~~~~~~
~~~~~~~~~~~~~~~~~~~~~~~~~~\Box$

For regular graphs, we know that not all $k$-regular graph have
$rd(G)=k$. For example, we know that the Petersen graph $P$ is a
$3$-regular graph but $rd(P)=4$ in \cite{CDHHZ}. The following
theorems give some regular graphs satisfying $rd(G)=k$.
\begin{thm}\label{reg2}
Let $G$ be a connected $k$-regular graph of even order satisfying
$k\geq \frac{6}{7}|V(G)|$. Then $rd(G)=k$.
\end{thm}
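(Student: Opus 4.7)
The plan is to establish matching upper and lower bounds, both of which will fall out of results already stated.

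For the lower bound, I would observe that a connected $k$-regular graph has average degree exactly $d=k$. Applying Lemma \ref{deg} immediately yields $rd(G) \geq \lfloor k \rfloor = k$. This step uses only the regularity hypothesis and requires nothing else.

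For the upper bound, the natural route is to prove that $\chi'(G)=k$, i.e., that $G$ is Class $1$ (equivalently, $1$-factorable). Once this is known, the chain in Lemma \ref{lem1} gives $rd(G) \leq \chi'(G) = \Delta(G) = k$, and we are done. To establish that $G$ is Class $1$, I would invoke the classical theorem of Chetwynd and Hilton on $1$-factorizations of dense regular graphs, which states that any $k$-regular graph of even order $n$ satisfying $k \geq \frac{6}{7}n$ admits a proper edge-coloring with $k$ colors. This is precisely the hypothesis assumed in the theorem, so the upper bound follows at once with no further construction.

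Combining the two bounds yields $rd(G)=k$. The main (indeed, the only nontrivial) obstacle is the external $1$-factorization input; once it is cited, the proof reduces entirely to applications of Lemma \ref{lem1} and Lemma \ref{deg}, and no new edge-coloring needs to be built by hand. It is also worth remarking why the constant $\tfrac{6}{7}$ is not wasted: without it, $G$ could be Class $2$ (so $\chi'(G)=k+1$), in which case Lemma \ref{lem1} only bounds $rd(G)$ by $k+1$, and the Petersen graph example flagged before the theorem shows that the value $k+1$ can actually be attained.
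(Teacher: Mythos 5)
Your proposal is correct and follows essentially the same route as the paper: the lower bound $rd(G)\geq k$ comes from the average-degree bound (Lemma \ref{deg}, via Lemma \ref{regu}), and the upper bound comes from the Chetwynd--Hilton result (Lemma \ref{lem5}) that such a graph is Class $1$, so $rd(G)\leq\chi'(G)=k$ by Lemma \ref{lem1}. Nothing is missing.
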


\begin{thm}\label{reg3}
Let $G$ be a connected $k$-regular bipartite graph. Then $rd(G)=k$.
\end{thm}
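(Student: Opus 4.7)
The plan is to show both $rd(G) \le k$ and $rd(G) \ge k$, from which equality follows.

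For the upper bound, I would invoke K\"onig's edge-coloring theorem, which guarantees that every bipartite graph satisfies $\chi'(G) = \Delta(G)$. Since $G$ is $k$-regular, this gives $\chi'(G) = k$, and then Lemma~\ref{lem1} immediately yields $rd(G) \le \chi'(G) = k$. To make the construction explicit one can take any proper $k$-edge-coloring $c$ of $G$ and observe that since every vertex has degree $k$, the set $E_x$ of edges incident to any vertex $x$ uses all $k$ colors and hence is a rainbow set; in particular, for any two vertices $u$ and $v$, $E_u$ is a $u$--$v$ rainbow cut, so $c$ is a rainbow disconnection coloring.

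For the lower bound, the cleanest route is via Lemma~\ref{lem2} applied with parameter $k-1$ in place of $k$. Since $G$ is $k$-regular with $n$ vertices, we have $e(G) = kn/2$ and $\sigma_{k-1}(G) = 0$, because no vertex has degree strictly below $k$. A connected $k$-regular bipartite graph has equal-sized partite sets, so $n \ge 2k$, which comfortably meets the order hypothesis of Lemma~\ref{lem2} with the shifted parameter. The required strict inequality reduces to $kn/2 > k(n-1)/2$, i.e., $k > 0$, which holds. Hence $\lambda^{+}(G) \ge k$, and Lemma~\ref{lem1} gives $rd(G) \ge \lambda^{+}(G) \ge k$.

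Combining the two bounds yields $rd(G) = k$. There is no substantive obstacle here; both directions are short invocations of classical results (K\"onig's theorem and Lemma~\ref{lem2}) together with the inequality chain of Lemma~\ref{lem1}. The only routine checks are that $\sigma_{k-1}(G) = 0$ and that $n$ is large enough to apply Lemma~\ref{lem2} with the shifted parameter, both of which are immediate from the $k$-regularity and bipartiteness of $G$.
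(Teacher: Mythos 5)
Your proposal is correct and follows essentially the same route as the paper: the upper bound via K\"onig's theorem and Lemma~\ref{lem1}, and the lower bound via Mader's theorem (Lemma~\ref{lem2}), which is exactly what the paper's Lemma~\ref{regu} (through Lemma~\ref{deg}) encapsulates. The only cosmetic difference is that you apply Lemma~\ref{lem2} directly rather than citing the packaged Lemma~\ref{regu}; note only that the hypothesis $n\geq (k-1)+2\geq 3$ forces $k\geq 2$, so the trivial case $k=1$ (where $G=K_2$ is a tree) should be dispatched separately, as the paper's Lemma~\ref{deg} does.
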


\begin{thm}\label{reg4}
Let $G$ be a connected $(n-k)$-regular graph with order $n$, where
$1 \leq k\leq 4$. Then $rd(G)=n-k$.
\end{thm}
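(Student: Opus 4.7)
The plan is to prove the two bounds separately. For the lower bound $rd(G)\geq n-k$, I would invoke Lemma \ref{lem1}, which gives $rd(G)\geq \lambda^+(G)\geq \lambda(G)$, together with the classical fact that any graph $H$ with $\delta(H)\geq \lceil |V(H)|/2\rceil$ satisfies $\lambda(H)=\delta(H)$. Since $G$ is $(n-k)$-regular with $k\leq 4$, this hypothesis holds for all but finitely many $n$, giving $\lambda(G)=n-k$ directly. The few remaining small orders reduce, after imposing the parity constraint $(n-k)n\equiv 0\pmod 2$, to a short list of explicit graphs --- $K_n$ for tiny $n$, $K_n-M$ for $n\in\{4,6\}$, $C_5$, $C_6$, and a few small regular graphs --- each of which is handled by Remark $1$, Lemma \ref{cycle}, or Theorem \ref{thm1}.

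For the upper bound $rd(G)\leq n-k$, I would generalize the extension technique from the proof of Theorem \ref{mini}: locate a vertex $u\in V(G)$ for which $G-u$ admits a proper edge-coloring $c_0$ with at most $n-k$ colors, and extend $c_0$ to an edge-coloring $c$ of $G$ by assigning each edge $uw$ a color that is missing at $w$ in $G-u$. Such a color exists because each $w\in N_G(u)$ has only $n-k-1$ incident edges in $G-u$. Under $c$, the edge set $E_x$ incident with $x$ is rainbow for every $x\neq u$; hence for any two vertices $p,q$, at least one of them differs from $u$ and provides a rainbow cut. The task thereby reduces to producing, for each $k\in\{1,2,3,4\}$, a vertex $u$ with $\chi'(G-u)\leq n-k$. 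When $k=1$ this is immediate from Vizing applied to $K_{n-1}$; when $k=2$ with $G=K_n-M$, transitivity of $\mathrm{Aut}(K_n)$ on perfect matchings together with the $1$-factorability of $K_n$ supplies a proper $(n-2)$-coloring of $G-u$; when $k=3$ or $k=4$, the sparse structure of the complement $\overline{G}$ (a disjoint union of cycles, or a cubic graph) combined with $1$-factorization theorems for dense regular graphs (of Chetwynd--Hilton type, as also underlie Theorem \ref{reg2}) yields a suitable vertex after case analysis.

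The main obstacle lies in the genuinely Class $2$ situations, most notably $(n-3)$-regular graphs of odd order, where $G$ cannot be $1$-factorable and a simple edge-count shows that even $G-e$ has too many edges to be Class $1$ for any single edge $e$. The crucial observation is that passing from edge-deletion to vertex-deletion flips the parity of the vertex count, thereby removing the arithmetic obstruction to $1$-factorability of $G-u$. The delicate part is choosing $u$ with respect to the structure of $\overline{G}$---for example, so that $u$ lies on an odd cycle of $\overline{G}$, or so that its two non-neighbours in $G$ share a common non-neighbour---to guarantee that $G-u$ is actually Class $1$.
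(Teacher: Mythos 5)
Your overall architecture matches the paper's where it matters: the upper bound is obtained by deleting a vertex $u$, properly colouring $G-u$ with $n-k$ colours, and extending along the edges at $u$ using missing colours, so that $E_x$ is a rainbow cut for every $x\neq u$. Your lower bound takes a different but valid route ($\lambda(G)=\delta(G)$ when $\delta(G)\geq \lceil n/2\rceil$, combined with Lemma~\ref{lem1}, the only genuine exceptions being $C_5$ and $C_6$), whereas the paper gets it uniformly for all $n$ from the average-degree bound (Lemma~\ref{deg} via Lemma~\ref{regu}); both are fine.

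The gap is precisely in the step you flag as delicate: you never establish that $\chi'(G-u)\leq n-k$ for $k=3,4$, and the mechanism you propose for $k=4$ does not work in general. For $k=2,3$ no careful choice of $u$ is needed: $G-u$ has exactly $k-1\leq 2$ vertices of maximum degree $n-k$ (the $\overline{G}$-neighbours of $u$), so $(G-u)_\Delta$ is a forest and $G-u$ is Class $1$ by a Fournier-type criterion (the paper's Lemma~\ref{classs}); this is the missing lemma in your sketch, and Chetwynd--Hilton-type theorems for \emph{regular} graphs do not apply to the non-regular graph $G-u$. For $k=4$ the vertex-deletion route genuinely breaks down: $G-u$ has three max-degree vertices, and if $\overline{G}$ is a triangle-free cubic graph (e.g.\ the Petersen graph, $n=10$), then for every choice of $u$ these three vertices are pairwise adjacent in $G-u$, so $(G-u)_\Delta$ is a $K_3$, a disjoint union of cycles, and the Fournier/Akbari criterion fails; moreover $e(G-u)=\Delta(G-u)\cdot\lfloor (n-1)/2\rfloor$ exactly, so every colour class would have to be a maximum matching and none of your heuristics (odd cycles of $\overline{G}$, common non-neighbours) certifies Class $1$. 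The clean fix --- and what the paper does --- is to observe that for $k=4$ the order $n$ is forced to be even, so the Chetwynd--Hilton theorem (Lemma~\ref{even}) applies to $G$ itself for $n\geq 8$, giving $\chi'(G)=n-4$ directly and hence $rd(G)\leq n-4$ without deleting any vertex; only $n=6$, i.e.\ $G=C_6$, needs separate treatment.
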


Here, we list the following several lemmas, which will be used in
this work.

\begin{lem}{\upshape\cite{ACCGZ}}\label{classs}
Let $G$ be a connected graph. If every connected component of $G_\Delta$
is a unicyclic graph or a tree, and $G_\Delta$ is not a disjoint union of
cycles, then $G$ is Class $1$.
\end{lem}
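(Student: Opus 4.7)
The statement to prove is a structural sufficient condition for a connected graph $G$ to be Class $1$: namely, that every component of $G_\Delta$ is a tree or a unicyclic graph, together with the non-degeneracy condition that $G_\Delta$ is not a disjoint union of cycles. The plan is to argue by contradiction, assuming $\chi'(G)=\Delta(G)+1$, and then extract a contradiction from Vizing's classical adjacency/fan machinery applied to an edge-critical subgraph.

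First I would pass to a critical subgraph $H\subseteq G$ that is edge-chromatic critical and satisfies $\chi'(H)=\chi'(G)=\Delta+1$, where $\Delta=\Delta(G)$. Any critical Class $2$ graph is $2$-edge-connected and, by Vizing's Adjacency Lemma, enjoys the following strong property: for every edge $uv\in E(H)$ with $d_H(u)=k$, the vertex $v$ has at least $\Delta-k+1$ neighbours of degree $\Delta$ in $H$. Consequently every vertex of $H$, including the non-$\Delta$ vertices, is adjacent to at least two vertices of maximum degree. This forces a great deal of rigidity on $H_\Delta$, and since $H\subseteq G$, constrains $G_\Delta$ as well.

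The next step is to exploit the assumed structure of $G_\Delta$. Since some component $C$ of $G_\Delta$ is either a tree or a unicyclic graph that is \emph{not} a pure cycle (otherwise every component of $G_\Delta$ would be a cycle, contrary to hypothesis), $C$ contains a vertex $x$ of degree $1$ in $G_\Delta$. This $x$ has maximum degree in $G$ but only one neighbour in $G$ of maximum degree; equivalently, $\Delta-1$ of the edges incident to $x$ go to vertices of degree strictly less than $\Delta$. I would then run a Vizing fan argument rooted at $x$: start with a proper $\Delta$-edge-colouring of $H-e$ for a suitable critical edge $e$ incident to $x$, build a maximal fan at $x$ from the $\Delta-1$ low-degree neighbours, and trace an $\alpha\beta$-Kempe chain between two colours that are missing at $x$ and at an end of the fan. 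The tree/unicyclic hypothesis, combined with Vizing's Adjacency Lemma, guarantees that this Kempe chain cannot close up (the "closing" would require a $\Delta$-vertex at a position where $G_\Delta$ has degree $\le 1$, or would cycle around a $\Delta$-component that is not a cycle), so the chain can be inverted and the colouring extended to all of $H$, contradicting $\chi'(H)=\Delta+1$.

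The main obstacle is the unicyclic case: when $C$ is unicyclic but not itself a cycle, one must combine Fournier's classical "forest" argument with care at the unique cycle of $C$. The crux is that the presence of at least one pendant vertex of $G_\Delta$ in $C$ provides a starting vertex $x$ where the fan/Kempe argument can be initiated, and the single cycle in $C$ cannot fully absorb the resulting Kempe chain because some vertex along the chain must lie outside the cycle (otherwise $C$ would coincide with that cycle). Ruling out the cyclic obstruction rigorously is the delicate bookkeeping step; the "not a disjoint union of cycles" hypothesis is precisely what is needed to guarantee that such a pendant $x$ exists somewhere in $G_\Delta$, enabling the recolouring to succeed.
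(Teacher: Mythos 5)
You should first note that the paper itself does not prove this lemma: it is quoted from Akbari, Cariolaro, Chavooshi, Ghanbari and Zare \cite{ACCGZ}, so your attempt has to stand on its own, and as written it has a genuine gap. The opening is fine and standard: pass to an edge-critical subgraph $H$ with $\chi'(H)=\Delta(G)+1$ and $\Delta(H)=\Delta(G)$, and use Vizing's Adjacency Lemma to conclude that every vertex of $H$ has at least two neighbours of degree $\Delta$. What this actually yields is that $H_\Delta$ is a subgraph of $G_\Delta$ with minimum degree at least $2$, hence (under the hypothesis on $G_\Delta$) a disjoint union of the unique cycles of some unicyclic components of $G_\Delta$. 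That is not yet a contradiction, and this is exactly where your argument breaks down. Your pendant vertex $x$ of $G_\Delta$ is chosen in $G$, but the fan/Kempe argument you propose to root at $x$ takes place in $H$ and presupposes $x\in V(H)$ with $d_H(x)=\Delta$. In that case no fan is needed at all: the Adjacency Lemma already forces $x$ to have two $\Delta$-neighbours in $H$, hence at least two neighbours in $G_\Delta$, contradicting $d_{G_\Delta}(x)\le 1$ outright. The genuinely hard case is the complementary one: $x$ is not a $\Delta$-vertex of $H$ (or not in $H$ at all), and $H_\Delta$ consists precisely of the cycles sitting inside the unicyclic components of $G_\Delta$, avoiding every pendant vertex. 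Your sketch never engages with this case; the claim that the Kempe chain ``cannot close up'' is an assertion rather than an argument, and you yourself flag the decisive step as delicate bookkeeping left undone.

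That this step cannot be waved away is shown by the graph $P^\ast$ obtained from the Petersen graph by deleting one vertex: $P^\ast$ is a critical Class $2$ graph with $\Delta=3$ whose core $P^\ast_\Delta$ is a $6$-cycle. So critical graphs whose $\Delta$-core is a disjoint union of cycles do exist, and the entire content of the lemma is to show that such a critical subgraph cannot sit inside a connected $G$ whose core is a union of trees and unicyclic graphs with at least one vertex of core-degree at most $1$. That requires adjacency information strictly stronger than Vizing's Adjacency Lemma (in \cite{ACCGZ} this comes from an analysis of vertices having exactly two neighbours of maximum degree), not a generic fan computation at a vertex that may not even lie in the critical subgraph. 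You have identified the right toolkit and correctly located the difficulty, but the theorem itself remains unproved in your write-up.
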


\begin{lem}{\upshape\cite{CH}}\label{even}
Let $G$ be a regular graph of even order $n$ and degree $d(G)$ equal to
$n-3$, $n-4$, or $n-5$. Let $d(G)\geq 2\lfloor\frac{1}{2}(\frac{n}{2}+1)\rfloor-1$.
Then $G$ is Class $1$.
\end{lem}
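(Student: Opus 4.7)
The plan is to establish Class $1$ by producing a $1$-factorization of $G$, i.e., a decomposition of $E(G)$ into $d := d(G)$ perfect matchings; since $G$ is $d$-regular with $n$ even, this is equivalent to $\chi'(G) = \Delta(G)$. The numerical hypothesis $d \geq 2\lfloor \tfrac{1}{2}(\tfrac{n}{2}+1)\rfloor - 1$ unwinds to $d \geq \tfrac{n}{2}-1$ (with the strengthening $d \geq \tfrac{n}{2}$ when $n \equiv 2 \pmod 4$), placing $G$ squarely in the dense regime where the $1$-factorization/overfull machinery of Chetwynd and Hilton applies. The complement $\overline{G}$ is $r$-regular with $r = n-1-d \in \{2,3,4\}$, so the ``obstruction side'' of the problem lives in a very low-degree graph, which is what I would exploit.

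First I would verify that $G$ contains no overfull subgraph. Recall that $H \subseteq G$ is overfull when $|E(H)| > \Delta(G)\lfloor |V(H)|/2 \rfloor$. Since $G$ is regular of even order, $|E(G)| = dn/2 = \Delta(G)\lfloor n/2 \rfloor$, so $G$ itself just fails to be overfull. For a candidate odd-order subgraph $H$ on $m < n$ vertices, every vertex of $V(G)\setminus V(H)$ has at most $r \le 4$ non-neighbours among $V(H)$, so the cut $[V(H), V(G)\setminus V(H)]$ has at least $(n-m)(m-r)$ edges; together with $d$-regularity this gives the upper bound $|E(H)| \le \tfrac{dm - (n-m)(m-r)}{2}$. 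A short computation using $d \ge \tfrac{n}{2}-1$ and $r \le 4$ shows this stays $\le \Delta(G)\lfloor m/2 \rfloor$, so no proper overfull subgraph can occur.

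Next I would construct the $1$-factorization case by case on $r$. For $r=2$, $\overline{G}$ is a disjoint union of cycles; if all cycles are even then $\overline{G}$ is $2$-edge-colorable and one can lift a $1$-factorization of $K_n$ whose two chosen factors cover $\overline{G}$, and delete those two factors to obtain a $1$-factorization of $G$. If some cycle of $\overline{G}$ is odd, I would instead peel off perfect matchings of $G$ one at a time: the density hypothesis $d \ge n/2-1$ guarantees via a Hall/Tutte argument (applied to the residual regular graph after each removal) that a perfect matching exists at every stage, producing the required $d$ matchings. For $r=3$ and $r=4$, the same greedy matching-extraction strategy works, because $d-j$-regular graphs of even order with $d-j \ge n/2 - c$ retain enough toughness to admit a perfect matching throughout.

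The main obstacle is precisely the odd-parity obstruction: in the $r=2$ case with odd cycles in $\overline{G}$, and its $r=3,4$ analogues, the Class~$2$ alternative is ruled out only because the non-overfull check in the second paragraph succeeds. The entire role of the numerical hypothesis $d \ge 2\lfloor(n/2+1)/2\rfloor -1$ is to make that bound on induced subgraphs tight enough; I would budget most of the proof effort there. Once non-overfullness is in hand one may either finish via the explicit matching-extraction above or, more cleanly, invoke the Chetwynd--Hilton Overfull Conjecture (known in the regime $\Delta(G) \ge n/3$, which easily holds here) to conclude $G$ is Class $1$.
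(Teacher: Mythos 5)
The paper does not prove this lemma at all: it is imported verbatim from Chetwynd and Hilton's paper on regular graphs of high degree, so there is no internal proof to compare against, and your proposal has to be judged on its own. It contains two genuine gaps. First, the concluding step is not available: the Chetwynd--Hilton Overfull Conjecture is \emph{not} a theorem in the regime $\Delta(G)\geq n/3$ --- it is an open conjecture there, and the statement you are asked to prove is precisely one of the partial results Chetwynd and Hilton established \emph{toward} that conjecture, so invoking the conjecture is circular. Your verification that $G$ has no overfull subgraph is essentially sound (the quantity $(n-m)(m-r)$ is minimized at the ends of the range $r+1\leq m\leq n-1$, where it equals $d$), but the absence of overfull subgraphs does not by itself yield Class $1$: the Petersen graph is a regular, even-order graph with no overfull subgraph that is nevertheless Class $2$.

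Second, the fallback argument of greedily peeling perfect matchings does not work. After removing $d-2$ perfect matchings the residual graph is $2$-regular of even order, and nothing prevents it from being a disjoint union of two odd cycles, which has no perfect matching at all; more generally, greedy extraction can dead-end even when every intermediate regular graph does contain a perfect matching. This obstruction is exactly why the $1$-factorization conjecture is difficult and why the cited proof proceeds by a delicate edge-colouring argument (Vizing-type fan recolourings and a careful analysis of colour classes) rather than by matching extraction; the phrase ``retain enough toughness'' is doing all the work and is unsupported. Likewise, your $r=2$ even-cycle case silently assumes that any two disjoint perfect matchings of $K_n$ extend to a $1$-factorization, which itself requires proof. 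In sum, you have correctly translated the numerical hypothesis ($d\geq n/2-1$, strengthening to $d\geq n/2$ when $n\equiv 2\pmod 4$) and identified the right obstruction, but the proposal does not constitute a proof of the lemma.
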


\begin{lem}{\upshape\cite{CH}}\label{lem5}
Let $G$ be a regular graph of even order and degree $d(G)$
satisfying $d(G)\geq \frac{6}{7}|V(G)|$. Then $G$ is Class $1$.
\end{lem}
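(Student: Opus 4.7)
The plan is to sandwich $rd(G)$ between $n-k$ from below (by Lemmas~\ref{lem1} and~\ref{lem2}) and $n-k$ from above (by $\chi'(G)$ when feasible, and by a direct construction otherwise). For the lower bound, since $G$ is $(n-k)$-regular, every vertex has degree $n-k>n-k-1$, so $\sigma_{n-k-1}(G)=0$, and the edge-count $e(G)=\tfrac{1}{2}n(n-k)$ strictly exceeds $\tfrac{1}{2}(n-k)(n-1)$; applying Lemma~\ref{lem2} with parameter $n-k-1$ yields $\lambda^+(G)\geq n-k$, and therefore $rd(G)\geq n-k$ by Lemma~\ref{lem1}.

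For the upper bound I would split on $k$. The case $k=1$ forces $G=K_n$ and is resolved by Remark~1, while $k=2$ forces $\overline{G}$ to be a perfect matching, so $G\cong K_{2,2,\ldots,2}$ and Theorem~\ref{thm1} with $n_1=2$ yields $rd(G)=n-2$. For $k\in\{3,4\}$ my first move is to show that $G$ is Class~1 via Lemma~\ref{even}, in which case $rd(G)\leq\chi'(G)=n-k$ by Lemma~\ref{lem1}. For $k=4$ the parity of $n(n-4)$ forces $n$ even, and Lemma~\ref{even} applies whenever $n\geq 8$; the single boundary $n=6$ gives $G=C_6$, handled by Lemma~\ref{cycle}. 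For $k=3$ with $n$ even and $n\geq 6$, Lemma~\ref{even} again certifies $G$ as Class~1, while the small cases $n\leq 5$ reduce to $G=C_5$ (Lemma~\ref{cycle}) or are infeasible.

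The genuinely hard subcase is $k=3$ with $n$ odd and $n\geq 7$: here the Vizing bound of Lemma~\ref{lem1} only delivers $rd(G)\leq\chi'(G)\leq\Delta(G)+1=n-2$, one color too many. To shave the extra color I would delete any vertex $v\in V(G)$ and analyze $G-v$. Since $v$ has exactly two non-neighbors $x,y$ in $G$, these are the only vertices of maximum degree $n-3$ in $G-v$, while every other vertex has degree $n-4$. The induced subgraph $(G-v)_\Delta$ on $\{x,y\}$ is therefore either a single edge or a pair of isolated vertices; in both cases every component is a tree and $(G-v)_\Delta$ is not a disjoint union of cycles, so Lemma~\ref{classs} certifies that $G-v$ is Class~1. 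I would then properly edge-color $G-v$ with colors $\{1,\ldots,n-3\}$ and extend this to $G$ by the missing-color trick from the proofs of Theorems~\ref{thm1} and~\ref{mini}: each edge $vw$ with $w\in N_G(v)$ receives a color absent at $w$ in $G-v$, which exists because $d_{G-v}(w)=n-4<n-3$. Under this $(n-3)$-edge-coloring every vertex $w\neq v$ has a rainbow set $E_w$ of incident edges, so each pair $\{p,q\}\subseteq V(G)$ is separated by the rainbow cut $E_p$ for any $p\neq v$ in the pair, yielding $rd(G)\leq n-3$.

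The main obstacle is this odd-order subcase, where $G$ may well be Class~2 and Vizing alone is insufficient. The decisive observation that unlocks it is that, even when $G$ itself is Class~2, the deleted graph $G-v$ has a very small maximum-degree subgraph $(G-v)_\Delta$ on at most two vertices and at most one edge, exactly fitting the hypothesis of Lemma~\ref{classs}; this certifies $G-v$ as Class~1 and lets the missing-color extension carry through.
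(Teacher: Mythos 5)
Your proposal does not prove the statement at hand. Lemma~\ref{lem5} asserts that a regular graph of even order with degree at least $\frac{6}{7}|V(G)|$ is Class~$1$, i.e., that $\chi'(G)=\Delta(G)$. This is the deep theorem of Chetwynd and Hilton on $1$-factorizability of high-degree regular graphs, which the paper does not prove but simply cites from \cite{CH}. Nowhere in your argument do you establish that such a graph admits a proper $\Delta(G)$-edge-coloring; instead, you have written out a proof that $rd(G)=n-k$ for connected $(n-k)$-regular graphs with $1\leq k\leq 4$, which is Theorem~\ref{reg4} of the paper, a different result with a different hypothesis (your degree is $n-k$ for $k\leq 4$, not an arbitrary degree $\geq\frac{6}{7}n$) and a different conclusion (a value of the rainbow disconnection number, not membership in Class~$1$).

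Even setting aside the mismatch of statements, the tools you deploy could not yield Lemma~\ref{lem5}. Your Class~$1$ certifications rest on Lemma~\ref{even}, which covers only regular graphs of degree exactly $n-3$, $n-4$, or $n-5$, and on Lemma~\ref{classs}, which requires the subgraph induced by maximum-degree vertices to be very sparse --- a condition that fails badly for a regular graph, where $G_\Delta=G$. A graph of even order $n$ with degree, say, $\frac{6}{7}n$ falls outside both of these, and no elementary missing-color extension will produce a proper $\Delta(G)$-edge-coloring of a regular graph that might a priori be Class~$2$. The correct disposition of this statement is to cite it as an external result (as the paper does); if a proof were demanded, it would require the full Chetwynd--Hilton machinery, not the arguments you have assembled.
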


For regular graphs, it is easy to get the following result.
\begin{lem}\label{regu}
Let $G$ be a connected $k$-regular graph. Then $k \leq rd(G)\leq
k+1$.
\end{lem}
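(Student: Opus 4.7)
The plan is to obtain both bounds directly from results already established earlier in the paper. For the upper bound, I would invoke Lemma \ref{lem1}, which states that $rd(G)\leq \chi'(G)\leq \Delta(G)+1$ for every nontrivial connected graph. Since $G$ is $k$-regular, $\Delta(G)=k$, and the inequality $rd(G)\leq k+1$ is immediate; no further argument (and in particular, no explicit construction of an edge-coloring) is required.

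For the lower bound, the idea is to use Lemma \ref{deg}, which gives $rd(G)\geq \lfloor d\rfloor$ whenever $G$ is a connected graph with average degree $d$. A $k$-regular graph of order $n$ has exactly $\frac{kn}{2}$ edges, so its average degree equals $k$; Lemma \ref{deg} then yields $rd(G)\geq \lfloor k\rfloor = k$, closing the two-sided inequality. (Equivalently, one could argue via $rd(G)\geq \lambda^+(G)$, but piggybacking on Lemma \ref{deg} is the cleanest route since the counting $e(G)=\frac{kn}{2}>\frac{k(n-1)}{2}$ needed to invoke Lemma \ref{lem2} has already been carried out inside the proof of Lemma \ref{deg}.)

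There is no real obstacle: the lemma is essentially a packaging statement, bundling together the consequences of Lemmas \ref{lem1} and \ref{deg} in the specific form convenient for the applications that follow, namely Theorems \ref{reg2}, \ref{reg3}, and \ref{reg4}, each of which proceeds by ruling out the upper endpoint $k+1$ under an appropriate extra hypothesis (showing that $G$ is Class $1$ and thus $\chi'(G)=k$).
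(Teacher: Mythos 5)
Your proof is correct and takes essentially the same route as the paper: the authors likewise obtain $rd(G)\geq k$ from Lemma \ref{deg} applied to the average degree $k$, and $rd(G)\leq \chi'(G)\leq \Delta(G)+1=k+1$ from Lemma \ref{lem1}. No gaps.
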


\begin{proof}
Since average degree of $k$-regular graph $G$ is $k$, it
follows by Lemma \ref{deg} that $rd(G)\geq k$ .
Furthermore, $rd(G)\leq \chi'(G)\leq \Delta+1=k+1$ by Lemma \ref{lem1}.
\end{proof}

\noindent$ Proof~ of~ Theorem~ \ref{reg2}$. Let $G$ be a connected
$k$-regular graph of even order satisfying $k\geq
\frac{6}{7}|V(G)|$. We have $G$ is Class $1$ by Lemma \ref{lem5}.
Thus $\chi'(G)=k$. And as the above argument and Lemma \ref{lem1},
we get $rd(G)=k$.
$~~~~~~~~~~~~~~~~~~~~~~~~~~~~~~~~~~~~~~~~~~~~\Box$

$ $

\noindent$ Proof~ of~ Theorem~  \ref{reg3}.$ Since $G$ is
a bipartite graph, $\chi'(G)=\Delta(G)=k$ (see \cite{BM}). And by Lemma
\ref{lem1} and Lemma \ref{regu}, we
have $rd(G) = k$.
$~~~~~~~~~~~~~~~~~~~~~~~~~~~~~~~~~~~~~~~~~~~~~~~~~\Box$

$ $

\noindent$ Proof~ of~ Theorem~ \ref{reg4}$. We distinguish the
following three cases.

\textbf{Case 1.} $k=1$. We have $G=K_n$, it is true for $n\geq 2$
by Remark $1$.

\textbf{Case 2.} $k=2$ or $3$. Let $u\in V(G)$ and consider the
graph $H=G-u$. Then $\Delta(H)=n-k$ and the number of maximum degree
vertices of $H$ is one or two. So each component of $H_\Delta$ is a
tree. Therefore, it follows by Lemma \ref{classs} that $H$ is Class
$1$, that is $\chi'(H)=n-k$. We now obtain a proper
edge-coloring $c_{0}$ of $H$ using colors from $[n-k]$. For each
vertex $x\in N_G(u)$, $d_H(x)\leq n-k-1$, there is a $a_x\in [n-k]$
such that $a_x$ is not assigned to any edge incident with $x$ in
$H$. Since $E(G)=E(H)\cup \{ux\mid x\in N_G(u)\}$, we now extend the
edge-coloring $c_0$ of $H$ to an edge-coloring $c$ of $G$ by
assigning $c(ux)=a_x$ for any vertex $x\in N_G(u)$. Note that the set
$E_x$ of edges incident with $x$ is a rainbow set for each vertex $x\in
V(H)$. Let $v$ and $w$ be two vertices of $G$. Then at least one of
$v$ and $w$ belongs to $H$, say $v\in V(H)$. Since $E_v$ is a $v-w$
rainbow cut, $c$ is a rainbow disconnection coloring of $G$ using
$n-k$ colors. Therefore, $rd(G)\leq n-k$. By Lemma $\ref{regu}$,
$rd(G)\geq n-k$. Thus, $rd(G) = n-k$.

\textbf{Case 3.} $k=4$. Let $G$ be a $(n-4)$-regular graph with
order $n$, where $n\geq 5$. Then we know the $n$ must be even since
$2m=n(n-4)$. First, we consider $n\geq 8$. It is easy to verify that
$d(G)=n-4\geq 2\lfloor\frac{1}{2}(\frac{n}{2}+1)\rfloor-1$, it
follows by Lemma \ref{even} that $G$ is Class $1$. So,
$\chi'(G)=n-4$. Furthermore, we get $rd(G)=n-4$ by Lemma \ref{lem1}
and Lemma \ref{regu}. Secondly, it remains to consider case for
$n=6$ since $n$ is even. In this case, we have $G=C_6$. By Lemma
\ref{cycle}, we obtain $rd(G)=2=n-4$.~~~~~~~~~~~~~~~$\Box$

\section{Nordhaus-Gaddum-type results}

In the sequel, we study Nordhaus-Gaddum-type problem for rainbow
disconnection number of graphs $G$. We know that if $G$ is a
connected graph with $n$ vertices, the number of the edges in $G$ is
at least $n-1$. Since $2(n-1)\leq
e(G)+e(\overline{G})=e(K_n)=\frac{n(n-1)}{2}$, if both $G$ and
$\overline{G}$ are connected, $n$ is at least $4$.

In the rest of the paper, we always assume that all graphs have at
least $4$ vertices, both $G$ and $\overline{G}$ are connected. For
any vertex $u\in V(G)$, let $\bar{u}$ denote the vertex in
$\overline{G}$ corresponding to the vertex $u$. Now we give a
Nordhaus-Gaddum-type result for rainbow disconnection number.
\begin{thm}\label{ng}
Let $G$ and $\overline{G}$ be connected graph of order $n$.
Then $n-2\leq rd(G)+rd(\overline{G})\leq 2n-5$ and $n-3\leq rd(G)\cdot
rd(\overline{G})\leq (n-2)(n-3)$.
Furthermore, the bounds are sharp.
\end{thm}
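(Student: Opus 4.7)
The plan has two main branches: lower bounds and upper bounds. I first handle the \emph{lower bounds}. The central tool is Lemma \ref{deg}, which says $rd(H)\geq\lfloor d(H)\rfloor$ for the average degree $d(H)$. Since $|E(G)|+|E(\overline G)|=\binom{n}{2}$, the average degrees satisfy $d(G)+d(\overline G)=n-1$, an integer. Hence the fractional parts of $d(G)$ and $d(\overline G)$ sum to $0$ or $1$, and $\lfloor d(G)\rfloor+\lfloor d(\overline G)\rfloor\geq n-2$, giving $rd(G)+rd(\overline G)\geq n-2$. Since both $rd$-values are at least $1$ and their sum is at least $n-2$, minimizing $xy$ over $x+y\geq n-2$, $x,y\geq 1$ produces the minimum $n-3$ at $(1,n-3)$, so $rd(G)\cdot rd(\overline G)\geq n-3$.

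For the \emph{upper bound on the sum}, the first step is $rd(G)\leq n-2$. If instead $rd(G)=n-1$, Lemma \ref{lem3} yields two vertices of $G$-degree $n-1$, both isolated in $\overline G$, contradicting its connectivity. This gives $rd(G)+rd(\overline G)\leq 2n-4$, and the task is to save one more color by ruling out $rd(G)=rd(\overline G)=n-2$. The main technical tool is a \emph{pendant-reduction identity}: if $u$ is a pendant of a connected graph $H$, then $rd(H)=rd(H-u)$. Indeed, any rd-coloring of $H-u$ extends to $H$ by coloring the pendant edge $uw$ arbitrarily, since the singleton $\{uw\}$ is itself a rainbow $u$-$z$ cut for every $z\neq u$, while Lemma \ref{lem4} supplies the reverse inequality. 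Now assume $rd(G)=rd(\overline G)=n-2$. Lemma \ref{lem1} forces $\Delta(G),\Delta(\overline G)\geq n-3$; combined with $\Delta\leq n-2$ from step one, both max degrees lie in $\{n-3,n-2\}$. In the principal case $\Delta(G)=n-2$, a vertex $v$ with $d_G(v)=n-2$ is a pendant of $\overline G$, so the reduction gives $rd(\overline G-v)=n-2$; Lemma \ref{lem3} applied in $\overline G-v$ yields two vertices of $(\overline G-v)$-degree $n-2$, which correspond to two $G$-pendants $a,b$ both attached to $v$. Next apply pendant reduction to $G$ at $a$: $rd(G-a)=n-2$, and Lemma \ref{lem3} produces two further vertices of $G$-degree $n-2$ distinct from $v$ (note that $v$ itself has $G-a$-degree $n-3$). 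Thus $\overline G$ has at least three pendants, and iterated pendant removal gives $rd(\overline G)\leq n-4$, a contradiction. The remaining case $\Delta(G)=\Delta(\overline G)=n-3$ forces both graphs to be Class $2$ with $rd=\chi'=\Delta+1$ and $\delta(G),\delta(\overline G)\geq 2$; this regime is extremely restrictive (Vizing-type structural arguments force $n$ to be small and the residual small cases are handled by direct inspection). The product upper bound $rd(G)\cdot rd(\overline G)\leq(n-2)(n-3)$ follows immediately: $\max(rd(G),rd(\overline G))\leq n-2$, and the sum bound forces the other to be at most $n-3$, while if both are at most $n-3$ then the product is $\leq(n-3)^2\leq (n-2)(n-3)$.

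Finally, the \emph{sharpness} of the lower bounds is witnessed by $G=\overline G=P_4$, where $rd(P_4)=1$ by Lemma \ref{tree} gives sum $2=n-2$ and product $1=n-3$. For the upper-bound sharpness at $n\geq 6$, I would construct extremal examples whose $G$ and $\overline G$ both have $rd$ close to $n-2$, guided by Theorem \ref{thm1} on complete multipartite graphs and obtained by small modifications of them so that both complements remain connected. The hardest part of the whole argument will be the detailed case analysis excluding $rd(G)=rd(\overline G)=n-2$; the Class $2$ subcase in particular requires careful structural bookkeeping.
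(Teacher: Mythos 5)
Your overall skeleton matches the paper's (average-degree lower bound via Lemma \ref{deg}, the elementary inequality $xy\geq x+y-1$ for the product, $rd\leq n-2$ from connectivity of the complement and Lemma \ref{lem3}, then exclusion of the double-$(n-2)$ case), but the exclusion step has a genuine gap. Your pendant-reduction argument handles only the subcase $\Delta(G)=n-2$ (and, by symmetry, $\Delta(\overline G)=n-2$): there you correctly find a pendant of $\overline G$, reduce, invoke Lemma \ref{lem3} on the $(n-1)$-vertex graph, and produce two pendants of $G$ --- at which point you could in fact stop, since two degree-one vertices already give $rd(G)\leq n-3$ via the block argument (Lemma \ref{21} in the paper). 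But in the subcase $\Delta(G)=\Delta(\overline G)=n-3$ neither graph has a pendant, your reduction cannot start, and you offer only the assertion that ``Vizing-type structural arguments force $n$ to be small'' with residual cases ``handled by direct inspection.'' That assertion is unsubstantiated and, as stated, not even the right shape of claim: graphs with $\Delta(G)=\Delta(\overline G)=n-3$ exist for all large $n$, so nothing forces $n$ small; what must be shown is that $\Delta(G)\leq n-3$ already implies $rd(G)\leq n-3$. The paper proves exactly this (Lemma \ref{n-3}, Case 3): delete a vertex $u$ of maximum degree, observe that $F=G-u$ has at most two vertices of degree $n-3$ so that $F_\Delta$ is a forest and $F$ is Class $1$ by Lemma \ref{classs}, properly color $F$ with $n-3$ colors, and extend to the edges at $u$ using a missing color at each neighbor so that every $E_x$ with $x\neq u$ is a rainbow cut. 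Without this (or an equivalent) argument, the upper bounds $2n-5$ and $(n-2)(n-3)$ are not established.

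A second, smaller omission: sharpness of the upper bounds for $n\geq 6$ is only promised, not proved. The paper exhibits an explicit graph (two vertices $u,v$ joined to each other and to all of $V(G)\setminus\{u,v,w\}$, plus one extra edge at $w$ to keep both $G$ and $\overline G$ connected) and certifies $rd(G)\geq n-2$ and $rd(\overline G)\geq n-3$ by computing local edge-connectivities $\lambda(u,v)=n-2$ and $\lambda(p,q)=n-3$ and applying Lemma \ref{lem1}; some such explicit verification is needed, since ``small modifications of complete multipartite graphs'' will not automatically keep both complements connected while preserving the two $rd$ values.
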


For the proof of Theorem \ref{ng} we show some preparatory results
as follows.
\begin{lem}{\upshape\cite{CDHHZ}}\label{block}
Let $G$ be a connected graph, and let $B$ be a block of $G$ such
that $rd(B)$ is maximum among all blocks of $G$. Then $rd(G)=rd(B)$.
\end{lem}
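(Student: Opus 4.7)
\noindent\textbf{Proof proposal for Lemma \ref{block}.} The plan is to establish both $rd(B) \le rd(G)$ and $rd(G) \le rd(B)$. The first inequality is immediate from Lemma~\ref{lem4}: every block $B'$ of $G$ is a connected subgraph, so $rd(B') \le rd(G)$, and in particular this applies to the maximizing block $B$.

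For the reverse inequality, set $k = rd(B)$ and construct an edge coloring $c$ of $G$ using $k$ colors by gluing together rd-colorings of the individual blocks. Concretely, for each block $B'$ of $G$, fix an rd-coloring of $B'$ using colors from $\{1,2,\ldots,k\}$, which is possible because $rd(B') \le k$. Since any two distinct blocks of $G$ share at most one vertex and hence no edge, the edge sets $E(B')$ partition $E(G)$ and these block-colorings combine into a well-defined edge coloring $c$ of $G$.

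Verifying that $c$ is a rainbow disconnection coloring rests on the following structural fact about blocks: if $x, y \in V(B')$ for some block $B'$, then any $x$--$y$ path $P$ in $G$ uses at least one edge of $B'$, and more precisely the edges of $P$ lying in $E(B')$ contain an $x$--$y$ path of $B'$. This holds because every time $P$ leaves $B'$ it can only re-enter through a cut vertex, so $P \cap B'$ decomposes into sub-paths of $B'$ that concatenate into an $x$--$y$ walk in $B'$. Given any $u, v \in V(G)$, two cases arise. If $u$ and $v$ share a block $B^*$, pick a rainbow $u$--$v$ cut $R \subseteq E(B^*)$ provided by the rd-coloring of $B^*$; the structural fact forces $R$ to be a $u$--$v$ cut in $G$ as well. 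Otherwise, choose any block $B^*$ lying on the block-cut tree path between $u$ and $v$, and let $c_u, c_v$ be the cut vertices of $B^*$ separating it from the $u$-side and $v$-side of the tree (with the convention $c_u = u$ if $u \in V(B^*)$, and similarly for $v$); every $u$--$v$ path in $G$ passes through both $c_u$ and $c_v$, so a rainbow $c_u$--$c_v$ cut in $B^*$ becomes a $u$--$v$ cut in $G$. Either way, $c$ rainbow disconnects $G$ with $k$ colors, giving $rd(G) \le k = rd(B)$.

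The main obstacle is precisely the structural fact, which is intuitively clear from the block decomposition but needs a careful argument exploiting the defining property that two blocks intersect in at most one vertex (necessarily a cut vertex). Once this is in hand, the block-cut tree reduction is routine, and the degenerate subcases where $u$ or $v$ itself is a cut vertex of $B^*$ are absorbed by the convention above.
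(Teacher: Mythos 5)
The paper does not prove Lemma~\ref{block}; it is quoted from \cite{CDHHZ} without proof, so there is nothing internal to compare against. Your argument is correct and is the standard one: monotonicity (Lemma~\ref{lem4}) gives $rd(B)\le rd(G)$, and for the converse you reuse the same $rd(B)$ colors on every block (legitimate since distinct blocks are edge-disjoint) and route every separation problem through a single block via the block--cut tree. The only point worth tightening is your ``structural fact'': you argue that a path may leave $B'$ and re-enter through a cut vertex, with the pieces concatenating into a walk, but in fact a path between two vertices of $V(B')$ can never leave $B'$ at all --- leaving and returning would force it to visit the separating cut vertex twice, and an edge with both ends in $V(B')$ must lie in $E(B')$ because two blocks share at most one vertex. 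So every $x$--$y$ path with $x,y\in V(B')$ lies entirely in $B'$, which makes both of your cases immediate. With that cleaner statement, the proof is complete and correct.
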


\begin{lem}\label{21}
Let $G$ be a connected graph of order $n$.
If $G$ has at least two vertices of degree $1$, then $rd(G)\leq n-3$.
\end{lem}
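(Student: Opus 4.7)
The plan is to exploit the block decomposition of $G$ together with Lemma \ref{block}, which says that the rainbow disconnection number of a connected graph equals the largest $rd$ among its blocks.

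Let $u_1, u_2$ be two distinct vertices of degree $1$, with unique neighbors $v_1, v_2$ (possibly $v_1 = v_2$). Each edge $u_i v_i$ is a bridge, so $\{u_i, v_i\}$ together with this edge forms a $K_2$-block of $G$. Since two distinct blocks share at most one vertex and $u_i$ already lies inside this $K_2$-block, no other block of $G$ can contain $u_i$. Consequently, every block $B$ of $G$ other than these two leaf bridges avoids both $u_1$ and $u_2$, so $|V(B)| \leq n-2$.

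Next I would invoke the general baseline inequality $rd(H) \leq |V(H)| - 1$ valid for every connected graph $H$ on at least two vertices. This is essentially the content of Lemma \ref{lem3}, but a short direct argument is also available: if $\Delta(H) \leq |V(H)| - 2$ then Vizing's theorem gives $\chi'(H) \leq |V(H)|-1$ and hence $rd(H) \leq |V(H)|-1$ by Lemma \ref{lem1}; if $\Delta(H) = |V(H)|-1$, pick a vertex $v$ of maximum degree (note every other vertex is adjacent to $v$, so $\Delta(H-v) \leq |V(H)|-2$), properly edge-color $H-v$ with at most $|V(H)|-1$ colors via Vizing, and extend to $H$ by giving each edge $vx$ a color missing at $x$ in $H-v$; the resulting coloring makes $E_x$ rainbow for every $x \neq v$, which provides a rainbow cut for every pair of vertices. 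Applying this bound to each block $B$ other than the two leaf bridges yields $rd(B) \leq |V(B)|-1 \leq n-3$; for the two $K_2$ leaf blocks one has $rd = 1 \leq n-3$ (recall $n \geq 4$). Lemma \ref{block} then gives $rd(G) = \max_B rd(B) \leq n-3$.

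The main obstacle is of a bookkeeping nature: pinning down the baseline bound $rd(H) \leq |V(H)|-1$ and cleanly identifying the block containing each leaf. Once those are in place, the proof reduces to a one-step block-decomposition application.
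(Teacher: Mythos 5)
Your proof is correct and follows essentially the same route as the paper: identify the two leaf edges as $K_2$-blocks, observe that every other block has at most $n-2$ vertices, bound each block's RD number by $|V(B)|-1\leq n-3$, and conclude via Lemma \ref{block}. The only cosmetic difference is that the paper obtains the baseline bound $rd(B)\leq |V(B)|-1$ instantly from Lemma \ref{lem4} together with $rd(K_{n-2})=n-3$ (Remark 1), whereas you re-derive it with a Vizing-type extension argument (your pointer to Lemma \ref{lem3} is not quite the right citation for that bound, but your direct argument stands on its own).
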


\begin{proof}
Let $B$ be a block of $G$ such that $rd(B)$ is maximum among all blocks
of $G$. Then $|V(B)|\leq n-2$ since $G$ has at least two vertices of
degree $1$. It follows by Lemma \ref{lem4} and Remark $1$ that
$rd(B)\leq rd(K_{n-2})=n-3$. And by Lemma \ref{block},
$rd(G)=rd(B)\leq n-3$.
\end{proof}

\begin{lem}\label{n-3}
Let $G$ be a connected graph of order $n$ and contain at most one vertex
of degree at least $n-2$. Then $rd(G)\leq n-3$.
\end{lem}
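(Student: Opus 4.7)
The strategy is the remove-a-vertex-and-extend technique used repeatedly in the paper. I will pick a vertex $u\in V(G)$, produce a proper edge-coloring of $H=G-u$ with at most $n-3$ colors, and then extend to $G$ by assigning each edge $ux$ a color missing among the colors on the edges of $H$ at $x$. Under the resulting edge-coloring, the set $E_y$ of edges incident with $y$ is rainbow for every $y\in V(H)$; hence for any two vertices $p,q\in V(G)$, at least one of them, say $p$, lies in $V(H)$, and $E_p$ is a $p$-$q$ rainbow cut. This yields $rd(G)\leq n-3$.

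If $\Delta(G)\leq n-4$, Vizing's theorem gives $\chi'(G)\leq n-3$, and Lemma~\ref{lem1} finishes the argument with no removal. Otherwise, by hypothesis a vertex of degree $\geq n-2$ is unique if it exists, so I take $u$ to be that vertex when it exists, and otherwise take $u$ to be any vertex of degree $n-3$. In both cases $d_G(u)\geq n-3$, and every neighbor $x$ of $u$ has $d_G(x)\leq n-3$, so $d_H(x)\leq n-4$; this guarantees a missing color in any palette of size $n-3$ at each such $x$, so the extension step is legal. It remains to prove that $\chi'(H)\leq n-3$.

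Set $X=\{x\in V(H):d_H(x)=n-3\}$. Since $d_G(y)\leq n-3$ for every $y\neq u$, any $x\in X$ must satisfy $d_G(x)=n-3$ and $ux\notin E(G)$; so $X$ is contained in the non-neighborhood of $u$ inside $V(H)$, which has size $n-1-d_G(u)\leq 2$. Thus $|X|\leq 2$. For any component $H_i$ of $H$ with $\Delta(H_i)=n-3$, the induced subgraph $(H_i)_\Delta$ has at most two vertices and at most one edge, so each of its components is a tree and it is not a disjoint union of cycles; by Lemma~\ref{classs} such an $H_i$ is Class~$1$ with $\chi'(H_i)=n-3$. Components with $\Delta(H_i)\leq n-4$ satisfy $\chi'(H_i)\leq n-3$ directly by Vizing's theorem. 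Hence $\chi'(H)=\max_i\chi'(H_i)\leq n-3$, and the extension described above produces a rainbow disconnection coloring of $G$ with $n-3$ colors.

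The main obstacle is controlling $\chi'(H)$ when $\Delta(H)=n-3$, since Vizing's bound alone only yields $n-2$. The hypothesis that $G$ has at most one vertex of degree $\geq n-2$ is exactly what rescues the argument: it forces the chosen $u$ to have at most two non-neighbors in $V(H)$, which keeps $H_\Delta$ structurally simple enough for Lemma~\ref{classs} to certify the Class~$1$ property component by component.
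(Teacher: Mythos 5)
Your proof is correct and follows essentially the same approach as the paper's: remove a carefully chosen vertex $u$, bound the number of maximum-degree vertices of $H=G-u$ by two so that Lemma~\ref{classs} gives $\chi'(H)\leq n-3$, and extend by assigning each edge $ux$ a color missing at $x$. The only difference is cosmetic: you merge the paper's three cases into one argument via the bound $|X|\leq n-1-d_G(u)\leq 2$, and you are slightly more careful in applying Lemma~\ref{classs} component by component (the paper applies it to $F=G-u$ without noting that $F$ may be disconnected).
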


\begin{proof}
We distinguish three cases.

\textbf{Case 1.} There exists exactly one vertex, says $u$, of
degree $n-1$. Let $F=G-u$. We have $\Delta(F)\leq n-4$ since
$d_G(u)=n-1$ and $d_G(v)\leq n-3$ for any vertex $v\in V(G)\setminus u$.
Therefore, $\chi'(F)\leq n-3$. And we may obtain a proper
edge-coloring of $F$ using colors from $[n-3]$. For each vertex $x
\in N_G(u)$, since $d_F(x)\leq n-4$, there is a $a_x\in [n-3]$ such
that $a_x$ is not assigned to any edge incident with $x$ in $F$.
Since $E(G)=E(F)\cup \{ux\mid x\in N_G(u)\}$, we now extend the
edge-coloring $c_0$ of $F$ to an edge-coloring $c$ of $G$ by
assigning $c(ux)=a_x$ for any vertex $x\in N_G(u)$. Note that the set
$E_x$ of edges incident with $x$ is a rainbow set for each $x\in
V(F)$. Let $v$ and $w$ be two vertices of $G$. Then at least one of
$v$ and $w$ belongs to $F$, say $v\in V(F)$. Since $E_v$ is a $v-w$
rainbow cut, $c$ is a rainbow disconnection coloring of $G$ using
$n-3$ colors. Therefore, $rd(G)\leq n-3$.

\textbf{Case 2.}
There exists exactly one vertex, says $u$, of degree $n-2$.

Let $F=G-u$. If $\Delta(F)\leq n-4$, then $\chi'(F)\leq n-3$. As we
discussed in Case 1, we may obtain a rainbow disconnection coloring
of $G$ using $n-3$ colors. Otherwise, if $\Delta(F)=n-3$, then there
exists exactly one vertex, says $v$, with degree $n-3$ in $F$. We
claim that $F$ is Class $1$. Since $v$ is only one vertex with
$d_{F}(v)=\Delta(F)=n-3$, that is $F_\Delta$ is a tree (single
vertex), it follows by Lemma \ref{classs} that $F$ is Class $1$. So
$\chi'(F)=\Delta(F)=n-3$. We may get a proper edge-coloring $c_{0}$
of $F$ using colors from $[n-3]$. Since $v \notin N_{G}(u)$, for
each vertex $x \in N_{G}(u)$, $d_F(x)\leq n-4$, there is a $a_x\in
[n-3]$ such that $a_x$ is not assigned to any edge incident with $x$
in $F$. And $E(G)=E(F)\cup \{ux\mid x\in N_G(u)\}$, we extend $c_{0}$
to an edge-coloring $c$ of $G$ by setting $c(ux)=a_{x}$. Likewise,
$c$ is a rainbow disconnection coloring of $G$ using $n-3$ colors.
Therefore, $rd(G)\leq n-3$.

\textbf{Case 3.}
$\Delta(G)\leq n-3$.

If $\Delta(G)\leq n-4$, then $rd(G)\leq \chi'(G)\leq n-3$ by Lemma
\ref{lem1}. Thus, we may assume that $\Delta(G)=n-3$. Let $d(u)=n-3$
and $F=G-u$. If $\Delta(F) \leq n-4$, then $\chi'(F)\leq n-3$ by
Lemma \ref{lem1}. Then we obtain a rainbow disconnection coloring of
$G$ using $n-3$ colors as same as Case 1. If $\Delta(F)=n-3$, then
there exist at most two vertices of degree $n-3$ in $F$. So the each
component of $F_\Delta$ is a tree. It follows by Lemma \ref{classs}
that $F$ is Class $1$. Then $\chi'(F)=\Delta(F)=n-3$. We may get a
proper edge-coloring $c_{0}$ of $F$ using colors from $[n-3]$. Since
$\Delta(G)\leq n-3$, for each vertex $x \in N_{G}(u)$, we have
$d_F(x)\leq n-4$. Hence there is a $a_x\in [n-3]$ such that $a_x$ is
not assigned to any edge incident with $x$ in $F$. And
$E(G)=E(F)\cup \{ux\mid x\in N_G(u)\}$, we extend $c_{0}$ to an
edge-coloring $c$ of $G$ by assigning $c(ux)=a_{x}$. As the above
argument, $c$ is a rainbow disconnection coloring of $G$ using $n-3$
colors. Therefore, $rd(G)\leq n-3$.

\end{proof}
By the above Lemma \ref{n-3}, we can immediately get the following result.

\begin{cor}\label{n-2}
Let $G$ be a connected graph with order $n$. If $rd(G)\geq n-2$,
then there are at least two vertices of degree at least $n-2$.
\end{cor}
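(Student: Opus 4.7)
The plan is to prove Corollary \ref{n-2} as a direct contrapositive of Lemma \ref{n-3}. Lemma \ref{n-3} asserts that if a connected graph $G$ of order $n$ contains at most one vertex of degree at least $n-2$, then $rd(G)\leq n-3$. So I would argue by contradiction: suppose $G$ is a connected graph of order $n$ with $rd(G)\geq n-2$, and suppose for contradiction that $G$ has fewer than two vertices of degree at least $n-2$, i.e., at most one such vertex. Then Lemma \ref{n-3} applies and gives $rd(G)\leq n-3$, contradicting $rd(G)\geq n-2$. Hence $G$ must contain at least two vertices of degree at least $n-2$.

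There is essentially no obstacle here, since the corollary is logically equivalent to the contrapositive of the preceding lemma. The only subtlety worth checking is that the hypothesis ``at most one vertex of degree at least $n-2$'' in Lemma \ref{n-3} is indeed the negation of the conclusion of the corollary (``at least two vertices of degree at least $n-2$''), which is immediate. Thus the write-up is a single short paragraph invoking Lemma \ref{n-3} in contrapositive form.
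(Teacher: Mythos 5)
Your proposal is correct and matches the paper exactly: the paper derives Corollary \ref{n-2} as an immediate consequence (contrapositive) of Lemma \ref{n-3}, which is precisely your argument.
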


Now we are ready to prove Theorem \ref{ng}.

\noindent$Proof~ of ~Theorem~ \ref{ng}$.
Let $d$ and $\bar{d}$ be the average degree of $G$ and $\overline{G}$
respectively. Then $rd(G)\geq \lfloor d\rfloor$ and $rd(\overline{G})
\geq \lfloor\bar{d}\rfloor$
by Lemma \ref{deg}. Thus,
\begin{eqnarray*}
rd(G)+rd(\overline{G}) &\geq& \lfloor d\rfloor+ \lfloor\bar{d}\rfloor\\
&\geq& \lfloor d+\bar{d}\rfloor -1\\
&=& \lfloor \frac{2e(G)}{n}+\frac{2e(\overline{G})}{n} \rfloor -1\\
&=& \lfloor \frac{2}{n}\cdot \frac{n(n-1)}{2}\rfloor -1\\
&=& n-2.
\end{eqnarray*}
And the minimum value of $rd(G)\cdot rd(\overline{G})$ is achieved when $rd(G)=1$
and $rd(\overline{G})=n-3$ or $rd(\overline{G})=1$ and $rd(G)=n-3$.
Furthermore, Since both $G$ and $\overline{G}$ are connected, it follows
that $G$ and $\overline{G}$ have $\Delta(G), \Delta(\overline{G})\leq n-2$.
Thus, $rd(G), rd(\overline{G})\leq n-2$ by Lemma \ref{lem3}. Therefore, $n-2\leq
rd(G)+rd(\overline{G})\leq 2n-4$ and $n-3\leq rd(G)\cdot
rd(\overline{G})\leq (n-2)^2$. Now, we prove the two upper bounds
are not true. Assume that $rd(G)+rd(\overline{G})=2n-4$ or
$rd(G)\cdot rd(\overline{G})=(n-2)^2$, that is
$rd(G)=rd(\overline{G})=n-2$. Then $G$ has at least two vertices of
degree $n-2$ since $rd(G)=n-2$ by Corollary \ref{n-2}. Then we get
$\overline{G}$ has at least two vertices of degree $1$. It follows
by Lemma \ref{21} that $rd(\overline{G})\leq n-3$, this is a
contradiction with $rd(\overline{G})=n-2$. Hence, we have $n-2\leq
rd(G)+rd(\overline{G})\leq 2n-5$ and $n-3\leq rd(G)\cdot
rd(\overline{G})\leq (n-2)(n-3)$.

Furthermore, we prove that the bounds are both sharp. First, for the
lower bound, $G=P_4$ is a graph satisfying $rd(G)+rd(\overline{G})=n-2$ and $
rd(G)\cdot rd(\overline{G})=n-3$. Let $G=P_4$. Then $\overline{G}=P_4$.
So $rd(G)=rd(\overline{G})=1$ by Lemma \ref{tree}. Therefore, $
rd(G)+rd(\overline{G})=2$ and $ rd(G)\cdot rd(\overline{G})=1$.

Second, for the upper bound, we construct a graph $G$ of order $n$,
where $n\geq 6$, that satisfying $rd(G)+rd(\overline{G})=2n-5$ and
$rd(G)\cdot rd(\overline{G})=(n-2)(n-3)$ as follows. Let $G$ be a
graph of order $n$ and $u, v, w \in V(G)$. We join the edges $uv$
and $xu$, $xv$ for all $x\in V(G)\setminus \{u, v, w\}$, and then
add an edge $wy$, where $y$ is one vertex of $V(G)\setminus \{u,
v, w\}$. Obviously, the $G$ and $\overline{G}$ be both connected graph.
Now we claim that $rd(G)+rd(\overline{G})=2n-5$ and $rd(G)\cdot
rd(\overline{G})=(n-2)(n-3)$. We only need to prove that
$rd(G)+rd(\overline{G})\geq 2n-5$ and $rd(G)\cdot
rd(\overline{G})\geq (n-2)(n-3)$. First, we have $\lambda(u,v) =
n-2$ by construction of $G$, so $rd(G)\geq n-2$ by Lemma \ref{lem1}.
Next, for any two vertices $p, q \in
V(\overline{G})\setminus\{\bar{u}, \bar{v}, \bar{w}, \bar{y}\}$, we
have $\lambda(p,q)= n-3$ since $x$ is common neighbour of $p$ and
$q$ for each vertex $x\in V(G)\setminus \{ \bar{u}, \bar{v}, p, q
\}$ and $pq$ is an edge in $\overline{G}$. So $rd(\overline{G})\geq
n-3$ by Lemma \ref{lem1}. Hence, $rd(G)+rd(\overline{G})\geq 2n-5$
and $rd(G)\cdot rd(\overline{G})\geq (n-2)(n-3)$.
$~~~~~~~~~~~~~~~~~~~~~~~~~~~~~~~~~~~~~~~~~~~~~~~~~
~~~~~~~~~~~~~~~~~~~~~~~~~~~~~~~~~~~~~~~\Box$

\end{document}